\newtheorem{theorem}{Theorem}[section]
\newtheorem{lemma}[theorem]{Lemma}
\newtheorem{conj}[theorem]{Conjecture}
\theoremstyle{definition}
\newtheorem{remark}[theorem]{Remark}
\numberwithin{equation}{section}
\newcommand{\RR}{\mathbb{R}}
\newcommand{\NN}{\mathbb{N}}
\newcommand{\QQ}{\mathbb{Q}}
\newcommand{\ZZ}{\mathbb{Z}}
\newcommand{\mt}{\mapsto}
\newcommand{\cV}{\mathcal{V}}
\newcommand{\cN}{\mathcal{N}}
\newcommand{\cW}{\mathcal{W}}
\newcommand{\sB}{\mathscr{B}}
\newcommand{\Bad}{\mathbf{Bad}}
\newcommand{\bx}{\mathbf{x}}
\newcommand{\bv}{\mathbf{v}}
\newcommand{\br}{\mathbf{r}}
\newcommand{\bw}{\mathbf{w}}
\newcommand{\diag}{\mathrm{diag}}
\newcommand{\dist}{\mathrm{dist}}
\newcommand{\SL}{\mathrm{SL}}
\newcommand{\Ad}{\mathrm{Ad}}
\newcommand {\ignore}[1]  {}
\newif\ifdraft\drafttrue
\newcommand\eq[2]{{\ifdraft{\ \tt [#1]}\else\ignorespaces\fi}\begin{equation}\label{eq:#1}{#2}\end{equation}}
\newcommand {\equ}[1]     {\eqref{eq:#1}}
\newcommand{\ggm}{G/\Gamma}
\newcommand\hd{Hausdorff dimension}
\newcommand\ba{badly approximable}
\newcommand\hs{homogeneous space}
\newcommand{\x}{{\bf x}}
\newcommand{\rr}{{\bf r}}
\renewcommand{\emptyset}{\varnothing}
\renewcommand{\setminus}{\smallsetminus}
\begin{document}

\title{Bounded orbits of diagonalizable flows on $ \SL_3(\RR)/\SL_3(\ZZ)$}

\author{Jinpeng An}
\address{LMAM, School of Mathematical Sciences, Peking University, Beijing, 100871, China}
\email{anjinpeng@gmail.com}

\author{Lifan Guan}
\address{School of Mathematical Sciences, Peking University, Beijing, 100871, China}
\email{guanlifan@gmail.com}

\author{Dmitry Kleinbock}
\address{Department of Mathematics, Brandeis University, Waltham MA 02454-9110, USA}
\email{kleinboc@brandeis.edu}

\thanks{Research supported by NSFC grant 11322101 (J.A.) and by NSF grant DMS-1101320 (D.K.)}

\begin{abstract}
We prove that for any countably many one-parameter diagonalizable subgroups $F_n$ of $\SL_3(\RR)$, the set of $\Lambda\in\SL_3(\RR)/\SL_3(\ZZ)$ such that all the orbits $F_n\Lambda$ are bounded has full Hausdorff dimension.
\end{abstract}

\maketitle

\section{Introduction}

The problems considered in this paper are motivated by concepts coming from number theory, in particular, by the notion of \ba\ pairs\footnote{
Similarly to $(x,y)\in \RR^2$ one can treat $m\times n$ matrices (systems of linear forms), which would correspond  to flows on \hs\ $ \SL_d(\RR)/\SL_d(\ZZ)$ for $d = m+n$. However, in the present paper we are able to establish our theorems  only for the case $d = 3$; thus we chose to concentrate on the lower-dimensional case for simplicity of the exposition. See \S\ref{S:7} for generalizations. }   of real numbers. Recall that  $(x,y)\in\RR^2$ is called {\sl badly approximable\/}  (notation: $(x,y)	\in\Bad$) if
\eq{def ba}{\max\{|qx - p|, |qy - r|\} \geq \frac{c}{q^{1/2}} }
for some $c=c(x,y)>0$ and for any $(p,r) \in \mathbb{Z}^{2}$, $q\in\NN$. The set $ \Bad$ has Lebesgue measure zero.
On the other hand, it is
{\sl thick}, that is,
the \hd\ of its intersection with any non-empty open subset of $\RR^2$  is equal to the dimension of the ambient space.
 The proof of the latter property, due to 
 Schmidt \cite{Sc1}, consisted in showing that $ \Bad$ is a winning set for a certain game introduced by Schmidt in that same paper. Schmidt showed that winning sets are thick and have remarkable countable intersection properties (see \S\ref{alphabeta} for more detail); thus the winning property is considerably stronger than thickness.

In 1986 
Dani \cite{Da1} exhibited an important interpretation of badly approximable vectors via dynamics on the space of lattices. Namely,
let \eq{sld}{G=\SL_{3}(\RR),\ \Gamma=\SL_{3}(\ZZ),\ X  = \ggm;} elements   of the \hs\  $X$ can be viewed as unimodular lattices in $\RR^{3}$,
via the identification of  $\Lambda = g\ZZ^3$ with $g\Gamma$.
Consider \eq{defu0}{U_0 = \{u_{x,y} : (x,y)\in\RR^2\}, \quad \text{where } u_{x,y} =\begin{pmatrix}
  1&& x\\ &1&y\\ &&1
\end{pmatrix}\in G.}
Capitalizing on earlier observations by Davenport and Schmidt, Dani showed that $(x,y)\in\Bad$ if and only if the trajectory $F^+ u_{x,y}\ZZ^{3}$, where
 \eq{sldgt}{F^+ = \{g_t: t \ge 0\}\text{ and }g_t = \diag(e^{t/2},e^{t/2}, e^{-t} )\,,}
is bounded in $X$. Also it is easy to see that the group $U_0$ is the {\sl expanding horospherical subgroup\/} of $G$ relative to $F^+$, that is, defined by
\eq{defehs}{H(F^+) := \{h\in G:\lim_{t\to+\infty}g_t^{-1}hg_t=e\}\,.}
This makes it possible to conclude that when $G$, $\Gamma$ are as in  \equ{sld} and $F^+$ is as in  \equ{sldgt},  the set  \eq{einfinity}{E(F^+) := \{\Lambda\in \ggm: F^+\Lambda\text{ is bounded} \} } is thick. Note that the above set has Haar measure zero in view of the ergodicity of the $g_t$-action on $\ggm$.

Now let us keep  $G$, $\Gamma$ and $X$ as in  \equ{sld}, and take a more general one-parameter subsemigroup $F^+$ of $G$.  
Margulis
conjectured\footnote{The conjecture of Margulis was stated in \cite{Ma} and settled in \cite{KM} in much bigger generality, for any Lie group $G$,
any lattice $\Gamma \subset G$ and any non-quasiunipotent subgroup $\{g_t\}$ of $G$.}
 in \cite{Ma} that the set \equ{einfinity} is thick whenever $g_1$ is {\sl not quasiunipotent}, that is, $\Ad\, g_1$ has at least one eigenvalue with modulus different from $1$. (The quasiunipotent case is drastically different due to Ratner's Theorem.) This conjecture was settled in a subsequent work of Margulis and the third-named author \cite{KM}.

A Diophantine interpretation of actions of generic diagonal subgroups of $G$ was first suggested in \cite{Kl}. Namely, take $ \lambda,\mu$ with \eq{condr}{\lambda,\mu \ge 0\text{ and }\lambda + \mu = 1\,,} and consider
 \eq{sldgtweights}{g_t = \diag(e^{\lambda t},e^{\mu t}, e^{-t} )\text{ and } F^+_{\lambda,\mu} = \{g_t: t \ge 0\}\,.}
 It was observed in \cite{Kl} that  $ u_{x,y}\ZZ^{3}\in E( F^+_{\lambda,\mu} )$ if and only if $(x,y)$ is {\sl badly approximable  with  weights} $\lambda,\mu$, that is, there is $c=c(x,y)>0$ such that $$\max\{ q^{\lambda}|qx-p|, q^{\mu}|qy-r|\}\ge c$$ for all   $(p,r) \in \mathbb{Z}^{2}$, $q\in
\mathbb{N}$. We will denote the set of $(x,y)$ with this property by $\Bad(\lambda,\mu)$. The classical case corresponds to the uniform choice of weights $\lambda = \mu = \frac12$. The sets $\Bad(\lambda,\mu)$ are known to be Lebesgue null \cite{Sc0} and thick \cite{PV, KW2}.

During recent years there has been a rapid progress in studying 
the intersection properties of those sets. A quite general form of a conjecture of Schmidt \cite{Sc3} asserting that $$\Bad(\tfrac13,\tfrac23) \cap \Bad(\tfrac23,\tfrac13) \ne\varnothing$$ was proved by Badziahin, Pollington and Velani \cite{BPV}. Then it was shown by the first-named author \cite{An1, An2} that for any $\lambda,\mu$ satisfying  \equ{condr}, $\Bad(\lambda,\mu)$ is an $\alpha$-winning subset of $\RR^2$ for some $\alpha$ independent of $(\lambda,\mu)$; see also \cite{ABV, NS} for  stronger results. Consequently, for any  countable collection $\{(\lambda_n,\mu_n) : n\in \NN\}$ of weight vectors, the intersection $\bigcap_n\Bad(\lambda_n,\mu_n)$ is thick (this was proved in \cite{BPV} under an additional restriction on the weight vectors). In view of the aforementioned correspondence, this proves that the intersection
 \eq{thickint}{
\bigcap_nE( F^+_{\lambda_n,\mu_n} )
}
is non-empty: indeed, it contains lattices of the form $ u_{x,y}\ZZ^{3}$ where $(x,y)\in \bigcap_n\Bad(\lambda_n,\mu_n)$.

\medskip

However, the above statement is not strong enough to prove that the set \equ{thickint} is a thick subset of $X$. Indeed, the difficulty comes from the fact that whenever $(\lambda,\mu)\ne ( \frac12, \frac12)$, the  expanding horospherical subgroup $ H(F^+_{\lambda,\mu})$ is strictly larger than  $U_0$; in fact, when $\lambda > \mu$ it is equal to the three-dimensional (Heisenberg) upper-triangular group
\eq{defu}{ U := \{u_{x,y,z} : (x,y,z)\in\RR^3\},\text{  where }u_{x,y,z}:=\begin{pmatrix}
  1&z&x \\&1&y\\&&1
\end{pmatrix}.}
The main goal of the present paper is to overcome this difficulty. For fixed $(\lambda,\mu)$ as in \equ{condr} with  $\lambda  >\mu$, we will show (see Theorem \ref{T:HAW_U})   that the set $$\big\{u\in U : u\ZZ^3\in E(F^+_{\lambda,\mu})\big\}$$ is a winning subset of $U$. This will make it possible to establish a certain winning property of the sets  $E(F^+_{\lambda,\mu}$) which will guarantee the thickness of their countable intersection.

Moreover, we are able to move beyond the diagonal case, and consider arbitrary diagonalizable one-parameter subgroups of $G$. In the three theorems below $G$, $\Gamma$ and $X$ are as in \equ{sld}. The following is one of our main results:

\begin{theorem}\label{T:dim}
Let $\{F_n:n\in\NN\}$ be one-parameter diagonalizable subgroups of $G$. Then the set
\begin{equation}\label{E:set1}
\bigcap_nE( F_n )
\end{equation}
is a thick subset of $X$.
\end{theorem}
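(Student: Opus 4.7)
The plan is to deduce the theorem from the paper's key technical result, Theorem~\ref{T:HAW_U}, via three reductions: normalize each $F_n$ by conjugation, transfer the winning property from the Heisenberg subgroup $U$ to the full space $X$, and invoke the countable intersection property of winning sets. For each $F_n$, I would write $F_n = F_n^+ \cup F_n^-$ so that $E(F_n) = E(F_n^+) \cap E(F_n^-)$; then conjugate each $F_n^\pm$ by a suitable element of $G$ (including Weyl-group permutations), rescale time, and possibly apply the outer automorphism $g\mapsto (g^T)^{-1}$ of $\SL_3$ to flip the sign of all eigenvalues. These operations put every one-parameter semigroup of interest in the form $g_n F^+_{\lambda_n,\mu_n} g_n^{-1}$ with $(\lambda_n,\mu_n)$ satisfying \equ{condr}. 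Since the resulting countable collection of semigroups still needs to be controlled simultaneously, it suffices to show that each $E(g F^+_{\lambda,\mu} g^{-1}) = g\cdot E(F^+_{\lambda,\mu})$ is HAW (hyperplane absolute winning) in $X$ and that this class is closed under countable intersections.

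To transfer Theorem~\ref{T:HAW_U} from $U$ to $X$, I would use a local product decomposition. Near any $\Lambda_0\in X$, write $G = W\cdot U$, where $W$ is a smooth complement to $U$ containing the stable and centralizing directions of $F^+_{\lambda,\mu}$, so that every nearby $\Lambda$ has a unique representation $wu\Lambda_0$. Because contractions under $g_t$ preserve boundedness of the forward orbit and the Cartan centralizer leaves $F^+$-orbits invariant, $wu\Lambda_0\in E(F^+_{\lambda,\mu})$ iff $u\Lambda_0\in E(F^+_{\lambda,\mu})$, so $E(F^+_{\lambda,\mu})$ is locally a cylinder over the set $S_{\lambda,\mu} := \{u\in U: u\ZZ^3\in E(F^+_{\lambda,\mu})\}$, which is HAW in $U$ by Theorem~\ref{T:HAW_U}. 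Since HAW is preserved under smooth charts, products with Euclidean factors, and $C^1$ diffeomorphisms (in particular left translations by elements of $G$), each $E(F_n)$ is HAW in $X$. The countable intersection property of HAW sets (the central good feature of McMullen's absolute-winning framework, analogous to Schmidt's classical theorem) then yields that $\bigcap_n E(F_n)$ is HAW, hence of full Hausdorff dimension in every nonempty open subset of $X$, i.e.\ thick.

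The main obstacle is clearly Theorem~\ref{T:HAW_U} itself: designing a winning strategy on the non-commutative $3$-dimensional Heisenberg group $U$ is substantially more delicate than on the abelian horospherical $U_0$ handled in \cite{An1, An2}, and it is presumably the heart of the paper. Secondary technical points that deserve careful verification are the stability of HAW under the outer automorphism of $\SL_3$ and under the local product transfer from $U$ to $X$, together with the treatment of the boundary cases $\lambda=\mu=\tfrac12$ (where the expanding horospherical is only the $2$-dimensional $U_0$, strictly inside $U$) and $\mu=0$ (on the boundary of \equ{condr}), either by direct argument or by absorbing them as limits of the generic regime $\lambda>\mu>0$.
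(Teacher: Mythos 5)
Your proposal follows essentially the same route as the paper: split each $F_n$ into $F_n^+\cup F_n^-$ so $E(F_n)=E(F_n^+)\cap E(F_n^-)$, normalize each semigroup to $gF^+_{\lambda,\mu}g^{-1}$ via conjugation/Weyl permutation/time-rescaling/the outer automorphism $g\mapsto(g^T)^{-1}$, transfer Theorem~\ref{T:HAW_U} from $U$ to $X$ by a local product chart (the paper uses the Bruhat cell $BU$ together with Borel density to get $\Lambda$ into a $B\times U$ chart), and then invoke the countable-intersection and thickness properties of HAW. The paper packages the normalization and the local-product transfer into the proof of Theorem~\ref{T:HAW} in \S\ref{S:6}, after which Theorem~\ref{T:dim} is a two-line corollary; your sketch merges these steps but the content is the same.

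One point your normalization does not cover as stated: a one-parameter \emph{diagonalizable} subgroup of $\SL_3(\RR)$ need not be $\RR$-diagonalizable --- its generator may have a complex-conjugate pair of eigenvalues, and no conjugation in $G$ (nor the transpose-inverse automorphism, nor a Weyl permutation) will turn such a group into a subgroup of the real diagonal torus. The paper handles this by the real Jordan decomposition: write $g_t=g_t^{(1)}g_t^{(2)}$ with $F_1=\{g_t^{(1)}\}$ $\RR$-diagonalizable, $F_2=\{g_t^{(2)}\}$ relatively compact and commuting with $F_1$; then $E(F^+)=E(F_1^+)$ since the compact factor cannot affect boundedness of orbits. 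This is a routine repair, but it is needed to make ``put every semigroup of interest in the form $g_nF^+_{\lambda_n,\mu_n}g_n^{-1}$'' literally true. Also, your worry about $\mu=0$ is unfounded: $\lambda=1>\mu=0$ falls squarely in the regime $\lambda>\mu$ of Theorem~\ref{T:HAW_U}, and the game argument does not need $\mu>0$; the only genuinely separate boundary case is $\lambda=\mu=\tfrac12$, which the paper reduces to the HAW property of $\Bad$ from \cite{BFKRW} via Lemma~\ref{L:HAW-mnfd}(v), exactly as you suggest.
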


Note that here we are considering the orbits of subgroups $F_n$ rather than of their non-negative parts (semigroups  $F^+_n$).

\medskip

In order to prove Theorem \ref{T:dim}, we are going to use a variant of the winning property -- so-called Hyperplane Absolute Winning (HAW) \cite{BFKRW}. It has many advantages over the traditional version, one of which is a possibility to define the game on smooth manifolds in an invariant way, as demonstrated recently by the third-named author and B.\ Weiss \cite{KW3}. The history, definitions and properties of the hyperplane modification of Schmidt's game are described in \S\S\ref{haw}--\ref{hawmfld}. Theorem \ref{T:dim}  will be deduced from the following:

\begin{theorem}\label{T:HAW}
Let $F=\{g_t:t\in\RR\}$ be a one-parameter diagonalizable subgroup of $G$, and let $F^+=\{g_t:t\ge0\}$. Then the set $E(F^+)$ is HAW on $X$.
\end{theorem}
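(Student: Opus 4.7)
The plan is to reduce Theorem~\ref{T:HAW} to Theorem~\ref{T:HAW_U} via a sequence of HAW-preserving normalizations, combined with a fibered-HAW argument. Each of the normalization steps below is implemented by a $C^\infty$-diffeomorphism of $X$, and hence preserves the HAW property by~\cite{KW3}. (i) Since $g_1$ is $\RR$-diagonalizable, write $F=hA_0h^{-1}$ for some one-parameter subgroup $A_0$ of the standard diagonal torus $A\subset G$ and some $h\in G$; the left-translation $\Lambda\mapsto h\Lambda$ sends $E(A_0^+)$ to $E(F^+)$, so we may assume $F\subset A$. (ii) Conjugating by a permutation matrix $w\in\SL_3(\ZZ)$, we may assume $g_t=\diag(e^{\alpha t},e^{\beta t},e^{\gamma t})$ with $\alpha\ge\beta\ge\gamma$ and $\alpha+\beta+\gamma=0$; if all three vanish then $E(F^+)=X$ trivially, so assume $\alpha>0>\gamma$. (iii) If $\beta<0$, applying the Cartan involution $\Lambda\mapsto\Lambda^*$ (a diffeomorphism of $X$ that conjugates $F^+$ to $F^-$) replaces $F^+$ by the ``time-reversed'' semigroup, whose generator has sorted eigenvalues $(-\gamma,-\beta,-\alpha)$ with middle eigenvalue $-\beta>0$. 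Rescaling $t$ so that the smallest eigenvalue equals $-1$, we are reduced to the weighted flow $F^+_{\lambda,\mu}$ of~\eqref{eq:sldgtweights} with $\lambda+\mu=1$ and $\lambda\ge\mu\ge0$.

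For such a weighted flow, the next step is to establish that for every base lattice $\Lambda\in X$, the set $S_\Lambda:=\{h\in H(F^+):h\Lambda\in E(F^+)\}$ is HAW in the expanding horospherical $H(F^+)$, viewed as a smooth Euclidean space. In the generic case $\lambda>\mu$, one has $H(F^+)=U$ (the Heisenberg group), and this is Theorem~\ref{T:HAW_U} extended from $\ZZ^3$ to arbitrary $\Lambda$. The boundary case $\lambda=\mu=\tfrac12$, in which $H(F^+)=U_0\cong\RR^2$, is instead handled by the HAW property of $\Bad$ on $\RR^2$ established in~\cite{BFKRW}.

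Finally, near each $\Lambda_0\in X$, the map $(h,v)\mapsto hv\Lambda_0$ is a $C^\infty$-diffeomorphism from a neighborhood of $(e,e)$ in $H(F^+)\times V$ onto a neighborhood of $\Lambda_0$, where $V$ is a transversal to the $H(F^+)$-orbit; this yields a local foliation of $X$ by $H(F^+)$-orbits. Applying the fibered-HAW principle of~\cite{KW3} --- a subset of a $C^1$-manifold that is HAW on each leaf of a $C^1$-foliation (with strategies uniform in the transverse parameter) is HAW in the total space --- to this foliation and the sets $S_\Lambda$ then yields that $E(F^+)$ is HAW on $X$. The principal difficulty, I expect, is the transverse uniformity required in the fiber step: one must ensure that the winning strategies produced in the proof of Theorem~\ref{T:HAW_U} can be chosen to depend continuously on $\Lambda$, with constants controlled on compact subsets of $X$. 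This requires revisiting the proof of Theorem~\ref{T:HAW_U} rather than treating it as a black box, and constitutes the technically delicate part of the argument.
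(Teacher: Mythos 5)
The reduction steps (i)--(iii) are essentially sound and parallel the paper's first paragraph of the proof (which phrases the reduction as conjugation by an automorphism $\sigma(g)=g_0\tau(g)g_0^{-1}$ with $\tau=\mathrm{id}$ or transpose-inverse, and also covers the general diagonalizable case via the real Jordan decomposition, which your argument omits but could easily add). The genuine gap is the final step. There is no ``fibered-HAW principle'' in \cite{KW3} --- or, to my knowledge, anywhere in the literature --- asserting that a set which is HAW on each leaf of a $C^1$ foliation is HAW in the total space, even with uniformity in the transverse parameter. Lemma~\ref{L:HAW-mnfd}~(v), the closest tool in \cite{KW3}, handles only \emph{product} sets $S\times N$, not sets whose slices vary with the transverse coordinate. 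Moreover the leaf-dependence here is genuine in your coordinates: with $(h,v)\mapsto hv\Lambda_0$, the slice $\{h:hv\Lambda_0\in E(F^+)\}$ over a transverse point $v$ really does depend on $v$, so you cannot hope to rewrite $E(F^+)$ as a product. Attempting to prove a general fibered-HAW statement would be at least as hard as the problem itself, since Alice must choose hyperplanes in the $8$-dimensional ambient space, and hyperplanes that work on individual $3$-dimensional leaves need not assemble to a hyperplane of $\RR^8$.

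The paper sidesteps this entirely by choosing the ``opposite'' local coordinates. Instead of putting $H(F^+)=U$ on the left of a transversal, it uses the Bruhat big cell $B\times U\to BU$, with $B$ the \emph{lower-triangular} group on the \emph{left}. The point is that for $b\in B$ the set $\{g_tbg_t^{-1}:t\ge0\}$ is bounded (every sub-diagonal entry is contracted or fixed), so
$$bu\Gamma\in E(F^+_{\lambda,\mu})\iff u\Gamma\in E(F^+_{\lambda,\mu}),$$
and the chart $\varphi:\Omega_B\times\Omega_U\to\Omega\subset X$, $\varphi(b,u)=bu\Gamma$, pulls $E(F^+_{\lambda,\mu})\cap\Omega$ back to a genuine product $\Omega_B\times\{u\in\Omega_U:u\Gamma\in E(F^+_{\lambda,\mu})\}$. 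The second factor is HAW by Theorem~\ref{T:HAW_U}, and Lemma~\ref{L:HAW-mnfd}~(v) then gives HAW of the product --- no uniformity-in-the-transversal issues arise. (The Borel density theorem is used to guarantee $\pi^{-1}(\Lambda)\cap BU\ne\emptyset$, so that every $\Lambda$ admits such a chart.) If you want to fix your argument, the remedy is precisely to replace the $H(F^+)\times V$ chart $(h,v)\mapsto hv\Lambda_0$ by the $B\times U$ chart $(b,u)\mapsto bu\Gamma$, after which the fiber step becomes the trivial product case rather than a conjectural fibered principle.
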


This in particular solves the $G=\SL_3(\RR)$ case of Question 8.1 in \cite{Kl}.

Regarding the expanding horospherical subgroup, we will also prove:

\begin{theorem}\label{T:HAW-expand}
Let $F^+$ be as in Theorem \ref{T:HAW} and let $H=H(F^+)$ be as in \equ{defehs}. Then for any $\Lambda\in X$, the set
$$\big\{h\in H :  h\Lambda \in E(F^+)\big\}$$
is HAW on $H$.
\end{theorem}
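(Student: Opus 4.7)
The plan is to exhibit a winning strategy for Alice in the hyperplane absolute game on $H$, played in smooth coordinates. After conjugating we may assume $F=\{g_t\}$ is a standard diagonal subgroup of $G$; then $\mathrm{Lie}(H)$ splits into $\Ad g_t$-eigenspaces with strictly positive weights, and linear (or exponential) coordinates identify $H$ with $\RR^{\dim H}$, where $\dim H \in \{2,3\}$ according to whether $g_1$ has repeated or distinct eigenvalues. Since HAW on a $C^1$ manifold is a diffeomorphism invariant by \cite{KW3}, it suffices to play the game in these coordinates; moreover, the statement is $G$-equivariant in $\Lambda$, so only the discrete shape of the set of primitive vectors of $\Lambda$ enters the argument.

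The dynamical input is Mahler's compactness criterion: $h\Lambda \notin E(F^+)$ iff for arbitrarily large $t$ the lattice $g_t h\Lambda$ contains a nonzero vector shorter than a fixed $\epsilon_0$, and such a vector arises as $g_t h v$ for primitive $v \in \Lambda$. Expanding $hv$ along the $\Ad g_t$-eigenspaces shows that $\{h \in H : \|g_t h v\| \leq \epsilon\}$ is contained in an $O(\epsilon)$-neighbourhood of an affine hyperplane $\cL_{t,v} \subset H$, whose direction is dictated by forcing the top-weight components of $hv$ to vanish; this condition is affine-linear in the coordinates of $H$. Alice's strategy is inductive: at stage $n$, given Bob's ball $B_n$ of radius $r_n$, she picks the smallest ``threat time'' $t_n$ for which some tube $\{h : \|g_{t_n} h v\| \leq \epsilon_0\}$ meets $B_n$, and removes the $\beta r_n$-neighbourhood of the corresponding hyperplane $\cL_{t_n, v}$. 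Bob's next ball then evades all threats with time near $t_n$, so that $t_{n+1} > t_n$, and the unique limit point $h_\infty \in \bigcap_n B_n$ has no short vectors in $g_t h_\infty \Lambda$ for any $t \geq 0$, giving $h_\infty \Lambda \in E(F^+)$.

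The main obstacle is a ``single hyperplane suffices'' lemma: Alice must check that the a priori many competing tubes with time near $t_n$ meeting $B_n$ all lie in the $\beta r_n$-neighbourhood of one affine hyperplane in $H$. When $\dim H = 2$ this is essentially the hyperplane-absolute analogue of the winning property of $\Bad(\lambda,\mu)$ from \cite{An1,An2}. In the three-dimensional Heisenberg case $H = U$, however, the non-commutativity of $U$ twists the hyperplanes $\cL_{t,v}$ in a $t$-dependent way, and proving that one affine hyperplane in $\RR^3$ catches all near-critical threats requires a detailed analysis of $\Ad g_t$ on $\mathrm{Lie}(U)$ combined with a Minkowski-type count of primitive vectors of $\Lambda$ inside the convex body dual to $B_n$. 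This linear-algebraic/geometry-of-numbers step is the technical heart of the argument; once it is in place, the inductive game closes as above, with the strategy independent of the base lattice $\Lambda$.
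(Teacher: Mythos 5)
Your high-level plan — pass to smooth coordinates on $H$, use Mahler's criterion and geometry of numbers to turn dynamical threats into hyperplane neighbourhoods, and win the hyperplane game — is in the spirit of the paper, but there are two places where the proposal skips over what is in fact the heart of the matter, and one of them looks like a genuine obstruction rather than a deferred computation.

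First, and most seriously, the ``single hyperplane suffices'' step is not what the paper proves and does not appear to be true. At a given scale, the threatening integer vectors $\bv=(p,r,q)$ with $H_B(\bv)$ of the relevant size split into countably many classes $\cV_{B,k}$ according to the size of $q$ (equivalently, the number of game rounds before the threat is ``ripe''), and the paper's Lemma \ref{L:const} produces one plane $L_k(B)$ \emph{per class}, not one plane for all of them. Vectors with the same height $H_B(\bv)$ but wildly different denominators $q$ generate $\Delta_\epsilon(\bv)$'s pointing in genuinely different directions, and no single affine hyperplane neighbourhood of the prescribed radius captures them all. This is exactly why the paper does not play the hyperplane absolute game directly but instead uses the hyperplane \emph{potential} game (Lemma \ref{L:HPW}): at each turn Alice removes a whole countable family $\{L_k(B)^{(2R^{-(n+k)}\rho_0)}:k\ge1\}$ with geometrically decaying radii, and the summability condition $\sum_k\rho_{i,k}^\gamma\le(\beta\rho_i)^\gamma$ is what makes this legal. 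Your strategy of removing a single hyperplane per round cannot handle the $k\ge2$ threats without an additional idea.

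Second, the claim that ``$\{h:\|g_t h v\|\le\epsilon\}$ is contained in an $O(\epsilon)$-neighbourhood of an affine hyperplane'' whose equation is ``affine-linear in the coordinates of $H$'' glosses over the fact that in the Heisenberg case the natural threat condition is the set
$$\Delta_\epsilon(\bv)=\Big\{(x,y,z):\ \Big|x-\tfrac pq - z\big(y-\tfrac rq\big)\Big|<\tfrac{\epsilon}{q^{1+\lambda}},\ \Big|y-\tfrac rq\Big|<\tfrac{\epsilon}{q^{1+\mu}}\Big\},$$
which is \emph{quadratic} in $(x,y,z)$ because of the $z\cdot y$ term. The paper converts this into a slab around a \emph{vertical} plane $ax+by+c=0$ only after invoking the Minkowski-type Lemma \ref{L:BPV} to produce an integer relation $(a,b,c)$ with $ap+br+cq=0$ and $|a|$, $|b+z_Ba|$ small; crucially the choice of $(a,b,c)$ depends on the ball $B$ through its $z$-coordinate $z_B$, and one must control how this choice drifts as balls shrink (this is the role of the extra $\rho(B)^{1/2}$ in $\cW(B,\bv)$ and of Lemma \ref{L:main1}). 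Your Minkowski remark is pointing in the right direction, but it needs to be the very first move, not a sub-remark inside the single-hyperplane lemma. Finally, the reduction from general $\Lambda$ to $\Lambda=\Gamma$ is not merely a matter of ``discrete shape'': the paper writes $\Lambda=g_0\Gamma$ with $u_0g_0\in BU$ via Borel density and the Bruhat decomposition, factors $ug_0=\varphi(u)\psi(u)$, and checks via the inverse function theorem that $\psi$ is a local diffeomorphism of $U$ so that Lemma \ref{L:HAW-mnfd}(iii) applies; without some such argument the independence of $\Lambda$ is unjustified.
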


The paper is organized as follows. In \S\ref{S:2}, we briefly recall the definitions and basic properties of Schmidt's game and its variants -- the hyperplane absolute game and the hyperplane potential game. The latter game, being introduced in \cite{FSU}, has the same winning sets as the hyperplane absolute game. It turns out that the potential game is an effective tool for proving the winning property for the absolute game. We also deduce Theorem \ref{T:dim} from Theorem \ref{T:HAW}. In \S\ref{S:3}, we highlight a special case of Theorem \ref{T:HAW-expand}, that is, Theorem \ref{T:HAW_U} below. The proof of Theorem \ref{T:HAW_U}, which forms the most technical part of this paper, is given in \S\S\ref{S:3}--\ref{S:pf_main}. In \S\ref{S:6} we complete the proofs of Theorems \ref{T:HAW} and \ref{T:HAW-expand} by using Theorem \ref{T:HAW_U}, and in the last section of the paper discuss various extensions and open questions.

\section{Preliminaries on Schmidt games}\label{S:2}

\subsection{Schmidt's $(\alpha,\beta)$-game}\label{alphabeta}

We first recall Schmidt's $(\alpha,\beta)$-game introduced in \cite{Sc1}. It
involves two parameters $\alpha,\beta\in(0,1)$  and is played by two players Alice and Bob on a complete metric space $(X,\dist)$ with a target set $S\subset X$. Bob starts the game by choosing a closed ball $B_0=B(x_0,\rho_0)$ in $X$ with center $x_0$ and radius $\rho_0$. After Bob chooses a closed ball $B_i = B(x_i,{\rho}_i) $, Alice chooses  $A_i = B(x_i', {\rho}_i')$ with $${\rho}'_i=\alpha {\rho}_i\text{ and }
\dist(x_i, x'_i) \leq (1-\alpha){\rho}_i\,,$$
 and then Bob chooses  $B_{i+1} = B(x_{i+1},{\rho}_{i+1}) $ with $${\rho}_{i+1}=\beta {\rho}'_{i} \text{ and }  \dist(x_{i+1}, x'_i) \leq (1-\beta){\rho}_i \,,$$
etc. This implies that
the balls are nested:
$$
B_0 \supset A_0 \supset B_1 \supset\cdots ;$$
Alice wins the game if the unique point  $\bigcap_{i=0}^\infty A_i=\bigcap_{i=0}^\infty B_i$ belongs to $S$, and Bob wins otherwise. The set $S$ is \textsl{$(\alpha,\beta)$-winning} if Alice has a winning strategy, is \textsl{$\alpha$-winning} if it is $(\alpha,\beta)$-winning for any $\beta\in(0,1)$, and is \textsl{winning} if it is $\alpha$-winning for some $\alpha$. Schmidt \cite{Sc1} proved that:

\begin{itemize}
\item[$\bullet$]  winning subsets of
Riemannian manifolds
are thick;
\item[$\bullet$]  if $S$ is $\alpha$-winning and $\varphi:X\to X$ is bi-Lipschitz, then $\varphi(S)$ is   $\alpha'$-winning, where $\alpha'$ depends on $\alpha$ and the bi-Lipschitz constant of $\varphi$;
\item[$\bullet$]  a countable intersection of $\alpha$-winning sets is again $\alpha$-winning.
\end{itemize}

As such, Schmidt's game has been a powerful tool for proving 
thickness of intersections of certain countable families of sets, see e.g.\ \cite{An1, An2,  BBFKW, BFK, BFKRW, Da1, Da2, Da3}. However, for a fixed $\alpha$, the class of $\alpha$-winning subsets of a Riemannian manifold depends on the choice of the metric, and is not preserved by diffeomorphisms. This makes it difficult to prove statements like Theorem \ref{T:dim} using the $(\alpha,\beta)$-game directly.

\subsection{Hyperplane absolute game on $\RR^d$}\label{haw}

Inspired by ideas of McMullen \cite{Mc}, the hyperplane absolute game on the Euclidean space $\RR^d$ was introduced in \cite{BFKRW}.
It has the advantage that the family of its winning sets is preserved by $C^1$ diffeomorphisms.
Let $S \subset \RR^d$ be a target set and let
$\beta \in \left(0, \frac13 \right)$.
As before Bob begins by choosing a closed ball $B_0$ of radius ${\rho}_0$.
For an affine hyperplane $L\subset\RR^d$ and $\rho>0$, we denote the ${\rho}$-neighborhood of $L$ by
$$L^{({\rho})}:=\{\bx\in\RR^d:\mathrm{dist}(\bx,L)<{\rho}\}.$$
Now, after Bob chooses a closed ball $B_i$ of radius ${\rho}_i$, Alice chooses a hyperplane neighborhood $L_i^{({\rho}'_i)}$ with ${\rho}'_i\le\beta {\rho}_i$,
and then Bob chooses a closed ball $B_{i+1}\subset B_i\setminus L_i^{({\rho}'_i)}$ of radius ${\rho}_{i+1}\ge\beta {\rho}_i$. Alice wins the game if and only if
$$\bigcap_{i=0}^\infty B_i\cap S\ne\emptyset.$$ The set $S$ is \textsl{$\beta$-hyperplane absolute
winning} (\textsl{$\beta$-HAW} for short)
if Alice has a winning strategy, and is \textsl{hyperplane absolute winning} (\textsl{HAW} for short)
if it is $\beta$-HAW for any $\beta\in(0,\frac{1}{3})$.

\begin{lemma}[\cite{BFKRW}]\label{L:HAW-Rd}
\begin{itemize}
  \item[(i)] HAW subsets   are winning, and hence thick.
  \item[(ii)] A countable intersection of HAW subsets   is again HAW.
  \item[(iii)]   The  image of an HAW set under a $C^1$ diffeomorphism $\RR^d \to \RR^d$ is HAW.
\end{itemize}
\end{lemma}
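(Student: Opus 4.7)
The plan is to establish each part of the lemma by exhibiting a strategy for Alice tailored to the game in question, relying throughout on the flexibility that HAW winning strategies are available for every parameter $\beta \in (0, 1/3)$.

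For (i), I would fix a small $\alpha$, say $\alpha = 1/4$, and show that $S$ is $(\alpha, \gamma)$-winning for every $\gamma \in (0, 1)$. Given $\gamma$, choose $\beta \in (0, 1/3)$ with $\beta \le \min(1-2\alpha,\, \alpha\gamma)$ and let $\sigma$ be Alice's $\beta$-HAW strategy for $S$. In the $(\alpha, \gamma)$-game, after Bob plays $B_i = B(x_i, \rho_i)$, Alice runs $\sigma$ to produce a hyperplane neighborhood $L_i^{(\rho'_i)}$ with $\rho'_i \le \beta \rho_i$, and then picks $x'_i$ at distance $(1-\alpha)\rho_i$ from $x_i$ along the unit normal to $L_i$, on the side away from $L_i$. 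The first inequality ensures $A_i := B(x'_i, \alpha\rho_i) \subset B_i \setminus L_i^{(\rho'_i)}$, while the second ensures that Bob's subsequent move $B_{i+1}$ (of radius $\alpha \gamma \rho_i \ge \beta \rho_i$) is a legal $\beta$-HAW response to $L_i^{(\rho'_i)}$. The common point therefore lies in $S$ by the winning property of $\sigma$; thickness is then inherited from Schmidt's classical result on winning sets cited in \S\ref{alphabeta}.

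For (ii), fix $\beta \in (0, 1/3)$ and partition $\NN$ into infinite sets $I_k = \{i^k_1 < i^k_2 < \cdots\}$ with bounded gap $M_k$; a concrete choice is $I_k = \{n \in \NN : n \equiv 2^{k-1} \pmod{2^k}\}$, for which $M_k = 2^k$. For each $k$, let $\sigma_k$ be a $\beta_k$-HAW winning strategy for $S_k$ with $\beta_k \le \beta^{M_k}$. In the master $\beta$-HAW game, at turn $i = i^k_j \in I_k$ Alice feeds $\sigma_k$ the ``virtual history'' $B_{i^k_1} \supset \cdots \supset B_{i^k_j}$ and plays the resulting hyperplane neighborhood (of radius at most $\beta_k \rho_i \le \beta \rho_i$); at other turns she plays trivially. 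Restricted to $I_k$, Bob's moves form a legal $\beta_k$-HAW sequence, since $\rho_{i^k_{j+1}} \ge \beta^{i^k_{j+1} - i^k_j}\rho_{i^k_j} \ge \beta_k \rho_{i^k_j}$ and disjointness from Alice's hyperplane neighborhoods is inherited through the nested balls. Hence the intersection point lies in $S_k$ for every $k$.

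For (iii), given an HAW set $S$ and a $C^1$ diffeomorphism $\varphi : \RR^d \to \RR^d$, I would have Alice play the $\beta$-HAW game with target $\varphi(S)$ by running a shadow $\beta'$-HAW game for $S$ with $\beta' < \beta$ on the preimage side. Given Bob's move $B_i = B(x_i,\rho_i)$, she takes $\tilde B_i$ to be a ball containing $\varphi^{-1}(B_i)$ of radius $\tilde \rho_i$ close to $\|D\varphi^{-1}(x_i)\|\, \rho_i$, feeds $\tilde B_i$ to the shadow strategy to obtain $\tilde L_i^{(\tilde \rho'_i)}$, and then plays on the target side a hyperplane neighborhood $L_i^{(\rho'_i)}$ containing $\varphi(\tilde L_i^{(\tilde \rho'_i)}) \cap B_i$. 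The chief obstacle is that $\varphi$ does not map hyperplanes to hyperplanes: on the scale of $B_i$ the image $\varphi(\tilde L_i^{(\tilde \rho'_i)})$ is only approximately flat, with distortion bounded by the modulus of continuity of $D\varphi$ on a compact neighborhood of the initial ball. To absorb this error one exploits the gap $\beta' < \beta$; since the nonlinear distortion is negligible at small scales, from some turn onwards the approximation produces a genuine $\beta$-HAW move, and after a harmless rescaling the game reduces to the one Alice has already won. Then $\varphi^{-1}$ of the master intersection point lies in the shadow game's intersection, which is in $S$, so the master point is in $\varphi(S)$.
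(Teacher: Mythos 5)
This lemma is not proved in the paper---it is imported verbatim from \cite{BFKRW} (Proposition 2.3 there), so there is no in-paper argument to compare against. Judged on its own terms, your strategy for each part is the right one and matches the standard approach, but parts (ii) and (iii) have genuine gaps.

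Part (i) is correct and complete: the bookkeeping with $\alpha=1/4$ and $\beta\le\min(1-2\alpha,\alpha\gamma)$ does exactly what you claim, namely $A_i=B(x_i',\alpha\rho_i)\subset B_i\setminus L_i^{(\rho_i')}$ and that Bob's reply is a legal $\beta$-HAW move.

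Part (ii) has a gap in the last line. From the shadow games you conclude $\bigcap_i B_i\cap S_k\ne\emptyset$ for each $k$, but the winning condition in the absolute game is only nonempty intersection, not that the limit set is a single point. Since $\rho_{i+1}\ge\beta\rho_i$ permits $\rho_i\to\rho_\infty>0$, the set $\bigcap_i B_i$ may be a ball of positive radius, and ``the intersection point lies in $S_k$ for every $k$'' does not follow from meeting each $S_k$ separately. The fix is standard and fits your framework: enlarge the partition to $\{I_0,I_1,I_2,\dots\}$, reserve the infinite block $I_0$ for moves where Alice plays a hyperplane through the center of $B_i$ of maximal width $\beta\rho_i$ (which forces $\rho_{i+1}\le\tfrac{1-\beta}{2}\rho_i<\rho_i$, hence $\rho_i\to0$), and run $\sigma_k$ on $I_k$ for $k\ge1$ as you do. With $\rho_i\to0$ guaranteed, your conclusion is correct.

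Part (iii) names the right obstruction---that $\varphi$ does not carry hyperplanes to hyperplanes and the error is controlled by the modulus of continuity of $D\varphi$---but the sketch skips the issue of whether the shadow game is legal. You need $\tilde B_{i+1}\subset\tilde B_i\setminus\tilde L_i^{(\tilde\rho_i')}$, yet what Bob's move actually gives you is $\varphi^{-1}(B_{i+1})\cap\tilde L_i^{(\tilde\rho_i')}=\emptyset$, and $\tilde B_{i+1}$ is a strictly larger ball than $\varphi^{-1}(B_{i+1})$; the overflow may hit the slab. One must either pass a fattened slab $\tilde L_i^{(c\tilde\rho_i')}$ forward (playing a narrower one in the shadow game) so the overflow is absorbed, or choose $\tilde B_{i+1}$ tightly and shrink $\tilde\rho_i'$ accordingly. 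Relatedly, the width of the pushed-forward slab is of order $\kappa(D\varphi)\,\beta'\rho_i$ plus a sublinear error, so $\beta'$ must be taken small depending on the condition number of $D\varphi$ on a compact set containing $\varphi^{-1}(B_0)$; this is consistent with your ``exploit the gap $\beta'<\beta$'' remark but needs to be said, as does the handling of the finitely many initial turns where $\rho_i$ is not yet small enough for the linearization to be accurate. None of these are fatal, but as written part (iii) is a plausible outline rather than a proof.
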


\subsection{HAW subsets of a manifold}\label{hawmfld}

The notion of HAW sets has been extended to subsets of $C^1$ manifolds in \cite{KW3}.
This is done in two steps. First one defines the absolute hyperplane game on an open subset $W
\subset \RR^d$. It is defined just as the absolute hyperplane game on
$\RR^d$, except for requiring that Bob's first move $B_0$ be
contained in $W$. If Alice has a winning strategy, we say that $S$ is
{\sl HAW on $W$}.
 Now let $M$ be a $d$-dimensional $C^1$ manifold, and let $\{(U_\alpha, \varphi_\alpha)\}$ be a $C^1$ atlas, that is, $\{U_\alpha\}$ is an open cover of $M$, and each $\varphi_\alpha$ is a $C^1$ diffeomorphism from $U_\alpha$ onto the open subset $\varphi_\alpha(U_\alpha)$ of $\RR^d$. A subset $S\subset M$ is said to be \textsl{HAW on $M$} if for each $\alpha$, $\varphi_\alpha(S\cap U_\alpha)$ is HAW on $\varphi_\alpha(U_\alpha)$. Note that Lemma \ref{L:HAW-Rd} (iii) implies that the definition is independent of the choice of the atlas (see \cite{KW3} for details).

\begin{lemma}[\cite{KW3}]\label{L:HAW-mnfd}
\begin{itemize}
\item[(i)]  HAW subsets of a $C^1$ manifold are thick.
\item[(ii)]  A countable intersection of HAW subsets of a $C^1$ manifold is again HAW.
\item[(iii)] Let $\varphi:M\to N$ be a diffeomorphism between $C^1$ manifolds, and let $S\subset M$ be an HAW subset of $M$. Then $\varphi(S)$ is an HAW subset of $N$.
\item[(iv)] Let $M$ be a $C^1$ manifold with an open cover $\{U_\alpha\}$. Then a subset $S\subset M$ is HAW on $M$ if and only if $S\cap U_\alpha$ is HAW on $U_\alpha$ for each $\alpha$.
\item[(v)] Let $M, N$ be $C^1$ manifolds, and let $S\subset M$ be an HAW subset of $M$. Then $S\times N$ is an HAW subset of $M\times N$.
\end{itemize}
\end{lemma}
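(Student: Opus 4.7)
The plan is to reduce every part of the lemma to its counterpart in Lemma \ref{L:HAW-Rd} (or a routine extension of that lemma to open subsets of $\RR^d$). The essential enabling remark, made just before the lemma statement in the paper, is that the HAW property of $S\subset M$ is independent of the chosen $C^1$ atlas: any two atlases share a common refinement, and on overlaps the transition maps are $C^1$ diffeomorphisms between open subsets of $\RR^d$, so the open-subset version of Lemma \ref{L:HAW-Rd}(iii) transports HAW between them.

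Granted atlas-independence, parts (i), (ii), (iii), (v) are direct chart-by-chart reductions. For (i), inside a chart $(U_\alpha,\varphi_\alpha)$ the set $\varphi_\alpha(S\cap U_\alpha)$ is HAW on $\varphi_\alpha(U_\alpha)$, and since Bob is free to place his initial ball inside any prescribed open subset, Lemma \ref{L:HAW-Rd}(i) yields full Hausdorff dimension on every open ball in $\varphi_\alpha(U_\alpha)$; pulling back by $\varphi_\alpha^{-1}$ preserves Hausdorff dimension, so $S$ is thick in $M$. For (ii), Lemma \ref{L:HAW-Rd}(ii) applies chart by chart to the images $\varphi_\alpha(S_n\cap U_\alpha)$. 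For (iii), the pullback atlas $\{(\varphi^{-1}(V_\beta),\psi_\beta\circ\varphi)\}$ on $M$ makes ``$S$ HAW on $M$'' literally identical to ``$\varphi(S)$ HAW on $N$.'' For (v), a product atlas on $M\times N$ reduces the claim to the following Euclidean statement: if $T\subset W\subset\RR^d$ is HAW on $W$ and $W'\subset\RR^{d'}$ is open, then $T\times W'$ is HAW on $W\times W'$. Alice projects Bob's balls to the first factor (the projection preserves radii), consults her $T$-strategy to obtain a hyperplane neighborhood $L^{(\rho')}\subset\RR^d$, and plays its cylindrical extension $(L\times\RR^{d'})^{(\rho')}$; any point of $T\cap\bigcap_i\pi(B_i)$ lifts by a nested-compacts argument to a point of $T\times W'$ in $\bigcap_i B_i$.

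The real content is (iv), whose main obstacle is a locality statement in $\RR^d$: if $\{W_\gamma\}$ is an open cover of an open $W\subset\RR^d$ and $S\cap W_\gamma$ is HAW on $W_\gamma$ for every $\gamma$, then $S$ is HAW on $W$. I plan to have Alice, starting from Bob's initial ball $B_0\subset W$, first enter a \emph{shrinking phase} in which in each round she plays a hyperplane through the center of $B_i$ with maximal thickness $\rho'_i=\beta\rho_i$. The geometric constraint $B_{i+1}\subset B_i\setminus L_i^{(\rho'_i)}$ then forces $\rho_{i+1}\le(1-\beta)\rho_i/2$, which remains $\ge\beta\rho_i$ thanks to $\beta<1/3$, so the phase is feasible and the radii decay at a geometric rate. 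By a Lebesgue-number argument applied to a finite subcover of the compact set $B_0$ extracted from $\{W_\gamma\}$, after finitely many rounds $i_0$ the ball $B_{i_0}$ is contained in a single $W_\gamma$. Alice then switches to the prescribed winning strategy for $S\cap W_\gamma$ on $W_\gamma$, treating $B_{i_0}$ as the new initial ball; this strategy guarantees a point of $S\cap W_\gamma\subset S$ in $\bigcap_{i\ge i_0}B_i=\bigcap_i B_i$, as required. The easier converse direction of (iv) uses only that HAW on an open subset passes trivially to open sub-subsets, since Alice's winning strategy on the larger domain is automatically a winning strategy on any smaller open subset (Bob's initial ball there is also a valid first move in the larger game).
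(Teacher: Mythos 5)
Your proposal is correct, and all the essential technical points check out: the shrinking phase is feasible precisely because $\beta<1/3$ forces $(1-\beta)\rho_i/2\ge\beta\rho_i$, and gives geometric decay $\rho_{i+1}\le(1-\beta)\rho_i/2<\rho_i/2$; the Lebesgue number argument then lands Bob's ball inside a single $W_\gamma$; the projection in (v) produces a legal run of the game on the first factor because cylindrical hyperplane neighborhoods project to hyperplane neighborhoods of the same thickness; and the nested-compacts lift works. The paper itself gives no argument and simply defers to \cite{KW3}, so what you have supplied is the missing content, presumably along the same lines as that reference.

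One organizational correction worth making explicit. You ``grant'' atlas-independence at the outset via a common-refinement argument, and then single out (iv) as the place where the locality lemma is the real content. But that ordering is slightly circular: the common-refinement argument alone only shows that HAW with respect to the refined atlas transports across transition maps (by the open-subset version of Lemma~\ref{L:HAW-Rd}(iii)); to get back from the refinement to the original coarser charts --- i.e.\ to show that HAW on $\varphi_\alpha(U_\alpha)$ follows from HAW on each of the smaller sets $\varphi_\alpha(U_\alpha\cap V_\beta)$ covering it --- one needs exactly the locality lemma you prove for (iv). Conversely, once atlas-independence (hence the locality lemma) is available, (iv) is immediate ``from the definition'' just as the paper asserts: restrict any fixed atlas of $M$ to each $U_\alpha$, and note that the two statements become the same family of Euclidean assertions. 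So the clean logical order is: prove the Euclidean locality lemma first, deduce atlas-independence (and (iv)) from it, and then (i), (ii), (iii), (v) are the routine chart-by-chart reductions you describe. You already have all the ingredients; only the dependency arrows need to be redrawn.
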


\begin{proof}
(i)--(iii) appeared as \cite[Proposition 3.5]{KW3} and are clear from Lemma \ref{L:HAW-Rd}. (iv) is clear from the definition. (v) is proved in \cite[Proof of Theorem 3.6(a)]{KW3}
\end{proof}

\subsection{Proof of Theorem \ref{T:dim} assuming Theorem \ref{T:HAW}} We can now see that Theorem \ref{T:dim} follows from Theorem \ref{T:HAW}. In fact, each $F_n$ in Theorem \ref{T:dim} can be divided into the union of two subsemigroups of the form $F^+$ appeared in Theorem \ref{T:HAW}. So the set \eqref{E:set1} is a countable intersection of sets of the form $E(F_n^+)$. Thus the thickness of the set \eqref{E:set1} follows from the HAW property of the sets $E(F_n^+)$ and parts (i), (ii) of Lemma \ref{L:HAW-mnfd}.

\subsection{Hyperplane potential game}

Finally, we recall the hyperplane potential game introduced in \cite{FSU}. Being played  on $\RR^d$, it has the same winning sets as the hyperplane absolute game. This allows one to prove the HAW property of a set $S\subset\RR^d$ by showing that it is winning for the hyperplane potential game (see \cite{NS}).

Let $S\subset\RR^d$ be a target set, and let $\beta\in(0,1)$, $\gamma>0$. The \textsl{$(\beta,\gamma)$-hyperplane potential game} is defined as follows: Bob begins by choosing a closed ball $B_0\subset\RR^d$. After Bob chooses a closed ball $B_i$ of radius ${\rho}_i$, Alice chooses a countable family of hyperplane neighborhoods $\{L_{i,k}^{({\rho}_{i,k})} : k\in\NN\}$ such that
$$\sum_{k=1}^\infty {\rho}_{i,k}^\gamma\le(\beta {\rho}_i)^\gamma,$$
and then Bob chooses a closed ball $B_{i+1}\subset B_i$ of radius $\rho_{i+1}\ge\beta \rho_i$. Alice wins the game if and only if
$$\bigcap_{i=0}^\infty B_i\cap\Big(S\cup\bigcup_{i=0}^\infty\bigcup_{k=1}^\infty L_{i,k}^{(\rho_{i,k})}\Big)\ne\emptyset.$$
The set $S$ is \textsl{$(\beta,\gamma)$-hyperplane potential winning} (\textsl{$(\beta,\gamma)$-HPW} for short) if Alice has a winning
strategy, and is \textsl{hyperplane potential winning} (\textsl{HPW} for short) if it is $(\beta,\gamma)$-HPW for any $\beta\in(0,1)$ and $\gamma>0$. The following lemma is a special case of \cite[Theorem C.8]{FSU}.

\begin{lemma}\label{L:HPW}
A subset $S\subset\RR^d$ is HPW if and only if it is HAW.
\end{lemma}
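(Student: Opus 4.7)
The plan is to prove the two implications of Lemma \ref{L:HPW} separately, following the strategy of \cite[Theorem C.8]{FSU}.

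For the direction HAW $\Rightarrow$ HPW, fix potential parameters $\beta\in(0,1)$ and $\gamma>0$, pick $\beta_0\in(0,\tfrac{1}{3})$ with $\beta_0\le\beta$, and let $\sigma$ be Alice's winning strategy for the $\beta_0$-HAW game. The plan is for Alice to run $\sigma$ inside the potential game, issuing at each turn the single hyperplane neighborhood $L_i^{(\rho_i')}$ with $\rho_i'\le\beta_0\rho_i$ prescribed by $\sigma$; this is a legal potential move because $(\rho_i')^\gamma\le(\beta\rho_i)^\gamma$. Let $x_\infty=\bigcap_i B_i$ be the outcome. If $x_\infty$ lies in one of the chosen neighborhoods, Alice wins by definition of HPW. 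Otherwise $x_\infty$ avoids every $L_i^{(\rho_i')}$, and the sequence of moves can be reinterpreted as an admissible play of the absolute game by replacing each $B_{i+1}$ (without changing its radius) by a concentric sub-ball that is disjoint from $L_i^{(\rho_i')}$ and still contains $x_\infty$; the strategy $\sigma$ prescribes the same next hyperplane for this modified ball, and since $\sigma$ is winning this forces $x_\infty\in S$.

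For the converse HPW $\Rightarrow$ HAW, fix $\beta_a\in(0,\tfrac{1}{3})$. The plan is to simulate one round of an auxiliary $(\beta_p,\gamma)$-potential game by a block of $N$ consecutive absolute rounds, where $\beta_p\in(0,1)$ is chosen close to $1$, $\gamma$ is large, and $N$ is an integer with $\beta_a^N\ge\beta_p$. At the start of each block Alice consults her HPW winning strategy to obtain a countable family $\{L_k^{(\rho_k)}\}_{k\ge 1}$ satisfying $\sum_k\rho_k^\gamma\le(\beta_p\rho)^\gamma$; she then orders the hyperplanes by decreasing radius and feeds them one at a time into the absolute rounds of the block, maintaining a queue of pending hyperplanes that carries over to the next block. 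The $\ell^\gamma$-summability forces the $\rho_k$ to decay fast enough that, once $\gamma$ is taken sufficiently large, every hyperplane in the queue is legal (i.e.\ has radius at most $\beta_a$ times the current absolute ball radius) by the time it is played. After the block Bob's current absolute ball lies inside $B\setminus\bigcup_k L_k^{(\rho_k)}$ and has radius at least $\beta_a^N\rho\ge\beta_p\rho$, hence is a valid HPW reply; iterating, the unique point of the nested intersection is an outcome of the simulated potential game that simultaneously avoids every hyperplane Alice has ever played, and the HPW winning criterion therefore places it in $S$.

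The main obstacle is the scheduling in the second direction. One must show that the hyperplanes produced over successive HPW super-rounds can be assigned to absolute rounds in such a way that each hyperplane is legal when it is finally played. The $\ell^\gamma$-constraint forces a decay of the $k$-th largest radius, while the ball radius in the absolute game decays geometrically with rate $\beta_a$, so the scheduling succeeds provided $\gamma$ is chosen large enough relative to $\log(1/\beta_p)/\log(1/\beta_a)$. Balancing these two rates is the only delicate point; once the parameters are tuned, the correspondence between the two games and the limit-point analysis proceed in a routine way.
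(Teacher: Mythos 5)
The paper does not prove Lemma~\ref{L:HPW} at all; it is invoked as a special case of \cite[Theorem~C.8]{FSU}, so there is no in-paper argument to compare against. Your plan of simulating one game inside the other is the right high-level idea, but both directions as written contain genuine gaps.

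For HAW $\Rightarrow$ HPW, the step ``replacing each $B_{i+1}$ (without changing its radius) by a concentric sub-ball that is disjoint from $L_i^{(\rho_i')}$ and still contains $x_\infty$'' is not sound. A concentric ball of the same radius \emph{is} $B_{i+1}$; what you presumably mean is a translated ball of the same radius inside $B_i$, but such a ball need not exist --- in the potential game Bob may legally take $B_{i+1}=B_i$, in which case no sub-ball of radius $\rho_{i+1}=\rho_i$ can avoid $L_i^{(\rho_i')}$ when that neighborhood meets $B_i$. Worse, even when a substitute ball exists, the assertion that ``the strategy $\sigma$ prescribes the same next hyperplane for this modified ball'' is unjustified: $\sigma$'s move at each stage depends on the full history of Bob's balls, so altering one ball perturbs every subsequent prescription, and the reinterpretation cannot be performed after the fact. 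The correct approach has Alice maintain a simulated HAW trajectory \emph{as the game proceeds}, choosing the simulated balls without knowledge of $x_\infty$; this needs some slack (for instance running the inner game at a parameter strictly smaller than $\beta$) and a separate argument to handle the moments when Bob's potential-game ball meets the announced neighborhood.

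For HPW $\Rightarrow$ HAW, the parameter choices are reversed. With $\beta_p$ close to $1$, the condition $\beta_a^N\ge\beta_p$ (where $\beta_a<1/3$) forces $N=0$, so the block is empty; and taking $\gamma$ large makes the constraint $\sum_k\rho_{i,k}^\gamma\le(\beta_p\rho_i)^\gamma$ \emph{weaker}, allowing many neighborhoods of comparable radius which cannot possibly be scheduled one per absolute round. Since $S$ being HPW gives Alice a winning strategy for \emph{every} $\beta\in(0,1)$ and $\gamma>0$, she should choose $\beta_p$ small (so the block length $N$ is large) and $\gamma$ small (so the $\ell^\gamma$-constraint forces all but a bounded number of the radii to be extremely small). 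Even after fixing that, the scheduling claim needs more: you must rule out the case that a queued neighborhood whose radius exceeds $\beta_a$ times the current ball radius eventually swallows the ball, since in that case the HPW winning criterion only yields $x_\infty\in S\cup\bigcup_{i,k}L_{i,k}^{(\rho_{i,k})}$ rather than $x_\infty\in S$. Carrying out this bookkeeping is precisely the content of \cite[Theorem~C.8]{FSU}, which the paper cites in place of a proof.
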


\begin{remark}\label{R:to0}
Similar to \cite[Remark 3.2]{KW3} for the HAW property, when proving a set $S\subset\RR^d$ is HPW, we may assume ${\rho}_i\to0$ and prove that Alice can force the unique point $\bx_\infty$ in $\bigcap_{i=0}^\infty B_i$ to be in $S\cup\bigcup_{i=0}^\infty\bigcup_{k=1}^\infty L_{i,k}^{({\rho}_{i,k})}$. In fact, in the first round, Alice can choose $L_{0,k}^{({\rho}_{0,k})}$ such that $\bigcup_{k=1}^\infty L_{0,k}$ is dense in $\RR^d$. This already guarantees a win for Alice if ${\rho}_i\not\to0$. Then in the subsequent rounds, Alice can relabel $B_i$ as $B_{i-1}$ and care only about the case of ${\rho}_i\to0$.
\end{remark}

\section{A special case of Theorem \ref{T:HAW-expand}}\label{S:3}

Both Theorem \ref{T:HAW} and Theorem \ref{T:HAW-expand} will be deduced from the following result:

\begin{theorem}\label{T:HAW_U}
Let  $(\lambda,\mu)$ be as in  \equ{condr} with $\lambda \ge \mu$,
$F^+_{\lambda,\mu}$ as in  \equ{sldgtweights} and  $U$ as in \equ{defu}.
Then the set
$$\{u\in U : u\Gamma \in E(F^+_{\lambda,\mu})\}$$
is HAW on $U$.
\end{theorem}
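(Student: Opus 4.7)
The plan is to prove Theorem~\ref{T:HAW_U} via the hyperplane potential game, which by Lemma~\ref{L:HPW} has the same winning sets as the hyperplane absolute game but gives Alice far more flexibility: she may throw countably many hyperplane neighborhoods per round subject only to a $\gamma$-potential constraint. Identify $U$ with $\RR^3$ via the coordinates $(x,y,z)$ so that hyperplane neighborhoods are meaningful. The first step is a standard Dani-type reformulation: by Mahler's compactness criterion, $u\Gamma\notin E(F^+_{\lambda,\mu})$ if and only if for every $\delta>0$ there exist $t\ge 0$ and a primitive $v=(a,b,c)^T\in\ZZ^3\setminus\{0\}$ with $\|g_t u v\|<\delta$. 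Direct computation gives
\[
g_t u_{x,y,z} v \;=\; \bigl(e^{\lambda t}(a+bz+cx),\ e^{\mu t}(b+cy),\ e^{-t}c\bigr),
\]
so the bad set $\{u:\|g_t u v\|<\delta\}$ is a non-isotropic tube around an affine line in $U$, contained in a neighborhood of each of two affine hyperplanes: one defined by $a+bz+cx=0$ and one by $b+cy=0$ (when $c\ne 0$; the case $c=0$ only contributes for a finite range of $t$ and is handled separately).

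Alice's strategy then runs as follows. Invoking Remark~\ref{R:to0}, assume $\rho_i\to 0$, and pick an increasing sequence $t_i\to\infty$ tied to $\rho_i$ (essentially $e^{-t_i}$ comparable to a fixed power of $\rho_i$). At round $i$, Alice enumerates those primitive $v\in\ZZ^3$ whose bad tube, for some $t\in[t_{i-1},t_i]$ and some $\delta\le\delta_0$, could meet $B_i$, and for each such $v$ she throws a hyperplane neighborhood in $U$ (choosing the thinner of the two candidate affine hyperplanes) that contains the tube. Provided the total potential is bounded by $(\beta\rho_i)^\gamma$, Alice wins: any limit point of Bob's balls that fails to lie in the target set must trigger a bad event at some round and therefore lie in one of the thrown hyperplane neighborhoods.

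The technical heart is the potential estimate
\[
\sum_{v}{\rho'_v}^\gamma \;\le\; (\beta\rho_i)^\gamma.
\]
For fixed $t$ in the round-$i$ window, a primitive $v$ contributing a tube that meets $B_i$ must satisfy $|c|\lesssim\delta e^{t}$, $|b|\lesssim\delta e^{-\mu t}+|c|$, and $|a|\lesssim\delta e^{-\lambda t}+|b|+|c|$, while the hyperplane width $\rho'_v$ is controlled by whichever linear form on $B_i$ is more constrained, namely $\min\bigl(\delta e^{-\lambda t}/\max(|b|,|c|),\ \delta e^{-\mu t}/|c|\bigr)$. A dyadic stratification by the magnitudes of $|c|$ and $|b|$, combined with Minkowski-type counts of primitive lattice vectors in each stratum, and summation over the logarithmically-bounded window $t\in[t_{i-1},t_i]$, should yield the required bound for $\gamma$ slightly less than $3$ and $\beta$ sufficiently small.

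The main obstacle is precisely this potential estimate. The $g_t$-action on $U$ is highly non-isotropic, with expansion rates $1+\lambda$, $1+\mu$ and $\lambda-\mu$ on the three coordinate directions, so Alice must choose hyperplane directions adaptively, and the Diophantine counting has to respect this asymmetry. The hypothesis $\lambda\ge\mu$ is exactly what ensures the third rate $\lambda-\mu$ is nonnegative, so that $U$ really is the full expanding horospherical subgroup $H(F^+_{\lambda,\mu})$ and the counting closes up; the genuinely new content of the theorem is the case $\lambda>\mu$, in which $H(F^+_{\lambda,\mu})=U$ is three-dimensional rather than the two-dimensional $U_0$ handled by classical arguments.
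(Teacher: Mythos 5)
Your setup is the same as the paper's: reduce to the hyperplane potential game via Lemma~\ref{L:HPW}, use the Dani--Mahler correspondence to translate boundedness into a Diophantine condition, and observe that the ``bad'' sets are tubes contained in affine hyperplane neighborhoods. Your choice of $u_{x,y,z}$ (rather than the paper's $u_{x,y,z}^{-1}$, which produces the non-affine condition $|x-\tfrac pq - z(y-\tfrac rq)|<\epsilon q^{-1-\lambda}$ that the paper must linearize via a Minkowski vector) is a genuine small simplification, since $a+bz+cx$ and $b+cy$ are already affine in $(x,y,z)$.

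However, the ``technical heart'' you defer to is not a detail; it is precisely where the argument breaks, and the dyadic-stratification-plus-lattice-count strategy does not close. The difficulty is that the dangerous vectors $v$ threatening a fixed ball are \emph{not} few. For a ball $B_i$ of radius $\rho_i$ and $|c|\sim Q$, the tubes with $c\sim Q$ that meet $B_i$ form a grid of spacing $\sim 1/c$ in the directions transverse to the tube; as soon as $Q\gtrsim 1/\rho_i$ there are on the order of $Q(Q\rho_i)^2=Q^3\rho_i^2$ of them, each contributing a hyperplane neighborhood of width $\sim\delta e^{-\mu t}/Q$. Summing $(\text{width})^\gamma$ over the dyadic range of $Q$ up to $\delta e^{t_i}$, one picks up a factor $e^{(3-\gamma(1+\mu))t}$ that blows up for $\gamma<3/(1+\mu)$, while HPW requires winning for \emph{every} $\gamma>0$ (your own qualifier ``for $\gamma$ slightly less than $3$'' already signals this). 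The underlying reason is structural: in the weighted case $\lambda>\mu$ the uniqueness of the dominant dangerous rational at each scale — which drives the unweighted $\Bad$ argument and makes a one-tube-one-plane count viable — simply fails. This non-uniqueness is exactly the obstacle that made Schmidt's weighted conjecture hard.

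What the paper does instead, and what you are missing, is an \emph{alignment} lemma in the spirit of \cite{BPV, An1}. To each dangerous rational $\bv=(p,r,q)$ one attaches, via Minkowski's linear forms theorem (Lemma~\ref{L:BPV}), a short integer vector $\bw=\bw(B,\bv)$ orthogonal to $\bv$, and hence an affine plane $L(B,\bv)$. The point of Lemmas~\ref{L:inequ} and~\ref{L:const} is that if $\bv_1,\bv_2$ are two dangerous vectors at the same scale $k$ relative to the same surviving ball $B$, then the integers $\bv_1\cdot\bw_2$ and $\bv_2\cdot\bw_1$ are forced to have absolute value $<1$ and hence vanish, which (outside a degenerate case handled separately) forces $\bw_1\parallel\bw_2$ and so $L(B',\bv_1)=L(B',\bv_2)$. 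Consequently Alice throws only \emph{one} plane neighborhood per scale index $k$, of width $\sim R^{-(n+k)}\rho_0$, and the potential is then a clean geometric series controlled by \eqref{E:R2}, valid for every $\beta,\gamma$. Without this correlation argument — which cannot be recovered from volume or lattice-point counting alone, as it is an integrality phenomenon, not a measure-theoretic one — the potential estimate in your proposal cannot be completed.
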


Note that if $\lambda > \mu$, then $U$ is the expanding horospherical subgroup for $F^+_{\lambda, \mu}$. Hence, in this case, Theorem \ref{T:HAW_U} is a special case of Theorem \ref{T:HAW-expand}.

\begin{remark}\label{R:special}
In the degenerate case $\lambda = \mu =1/2$, Theorem \ref{T:HAW_U} is a consequence of the HAW property of $\Bad$ proved in \cite{BFKRW}. In fact, in this case, $F^+_{\lambda,\mu}$ is the semigroup $F^+$ defined in \equ{sldgt}. In view of the commutativity of $F^+$ and $u_{0,0,z}$, it follows that $$u_{0,0,z}u_{x,y,0}\Gamma\in E(F^+) \quad \Longleftrightarrow  \quad u_{x,y,0}\Gamma\in E(F^+) \quad \Longleftrightarrow  \quad (x,y)\in\Bad.$$
Hence, the HAW property of the set $\{u\in U : u\Gamma \in E(F^+)\}$ follows from the same property of $\Bad$ and Lemma \ref{L:HAW-mnfd} (v). Consequently, it suffices to prove Theorem \ref{T:HAW_U} in the case $\lambda > \mu$, which will be our standing assumption in \S\S3--5.
\end{remark}

For technical reasons, we will prove Theorem \ref{T:HAW_U} by applying Lemma \ref{L:HAW-mnfd} (iii) to the diffeomorphism $\RR^3\to U$,
$(x,y,z)\mt u_{x,y,z}^{-1}$. 
We will need the following Diophantine characterization of the boundedness of $F^+_{\lambda,\mu}u_{x,y,z}^{-1}\Gamma$. For $\epsilon>0$ and $\bv=(p,r,q)\in\ZZ^2\times\NN$, we denote
$$\Delta_\epsilon(\bv):=\Big\{(x,y,z)\in\RR^3:\Big|x-\frac{p}{q}-z\Big(y-\frac{r}{q}\Big)\Big|<\frac{\epsilon}{q^{1+\lambda}}, \Big|y-\frac{r}{q}\Big|<\frac{\epsilon}{q^{1+\mu}}\Big\}.$$
Let
\begin{equation}\label{E:S-epsilon}
S_\epsilon(\lambda,\mu):=\RR^3\setminus\bigcup_{\bv\in\ZZ^2\times\NN}\Delta_\epsilon(\bv)
\end{equation}
and
\begin{equation}\label{E:S}
S(\lambda,\mu):=\bigcup_{\epsilon>0}S_\epsilon(\lambda,\mu).
\end{equation}

\begin{lemma}
\label{L:diophantine}
The trajectory $F^+_{\lambda,\mu}u_{x,y,z}^{-1}\Gamma$ is bounded if and only if $(x,y,z)\in S(\lambda,\mu)$, that is, there is $\epsilon=\epsilon(x,y,z)>0$ such that
\begin{equation}\label{E:diophantine}
\max\left\{q^\lambda|qx-p-z(qy-r)|,q^\mu|qy-r|\right\}\ge\epsilon, \qquad \forall \ (p,r,q)\in\ZZ^2\times\NN.
\end{equation}
\end{lemma}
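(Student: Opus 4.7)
The plan is a direct application of the Dani correspondence via Mahler's compactness criterion. Starting from the matrix identity
$$g_tu_{x,y,z}^{-1}\begin{pmatrix}p\\ r\\ q\end{pmatrix}=\begin{pmatrix}-e^{\lambda t}\bigl(qx-p-z(qy-r)\bigr)\\ -e^{\mu t}(qy-r)\\ e^{-t}q\end{pmatrix},$$
obtained by writing $u_{x,y,z}^{-1}=I-N+N^2$ with $N=u_{x,y,z}-I$ nilpotent, Mahler's criterion says that $F^+_{\lambda,\mu}u_{x,y,z}^{-1}\Gamma$ has compact closure in $X$ if and only if there exists $\delta>0$ such that the sup-norm of the above vector is $\geq\delta$ for every $t\geq0$ and every nonzero $\bv=(p,r,q)\in\ZZ^3$. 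Vectors with $q=0$ cause no trouble: the third coordinate then vanishes, but since $(p,r)\neq0$ one has $\max\{|p-zr|,|r|\}\geq1$, and $\lambda,\mu\geq0$ forces the sup-norm to be $\geq1$ for every $t\geq0$. After replacing $\bv$ by $-\bv$ when $q<0$, Mahler's condition reduces to the analogous assertion for $(p,r,q)\in\ZZ^2\times\NN$.

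For the $(\Leftarrow)$ direction I would take $\epsilon$ from \eqref{E:diophantine}, set $\delta:=\min\{1,\epsilon^{1/(1+\mu)}\}$ and fix $(p,r,q)\in\ZZ^2\times\NN$ and $t\geq0$. If $e^{-t}q\geq\delta$ there is nothing to prove; otherwise $e^t/q>\delta^{-1}>1$, so $\lambda\geq\mu$ gives $(e^t/q)^\mu\leq(e^t/q)^\lambda$, and the elementary inequality $\max\{c_1X,c_2Y\}\geq\min\{c_1,c_2\}\max\{X,Y\}$ yields
$$\max\bigl\{e^{\lambda t}|qx-p-z(qy-r)|,\ e^{\mu t}|qy-r|\bigr\}\geq(e^t/q)^\mu\max\bigl\{q^\lambda|qx-p-z(qy-r)|,q^\mu|qy-r|\bigr\}>\delta^{-\mu}\epsilon\geq\delta,$$
the next-to-last inequality being \eqref{E:diophantine}.

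For the $(\Rightarrow)$ direction I would be given $\delta$ from Mahler's criterion and, for each $(p,r,q)\in\ZZ^2\times\NN$, evaluate the Mahler bound at the specific time $t_q:=\log q+\sigma\geq0$ with $\sigma:=\log(2/\delta)$. Then $e^{-t_q}q=\delta/2<\delta$, so the third coordinate is dominated and
$$\max\bigl\{q^\lambda e^{\lambda\sigma}|qx-p-z(qy-r)|,\ q^\mu e^{\mu\sigma}|qy-r|\bigr\}\geq\delta.$$
Since $\sigma>0$ and $\lambda\geq\mu$ give $e^{\lambda\sigma}\geq e^{\mu\sigma}$, this forces $\max\{q^\lambda|qx-p-z(qy-r)|,q^\mu|qy-r|\}\geq\delta e^{-\lambda\sigma}=\delta^{1+\lambda}/2^\lambda$, which is precisely \eqref{E:diophantine} with $\epsilon:=\delta^{1+\lambda}/2^\lambda$.

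The whole argument is a routine instance of the Dani correspondence and presents no real obstacle; the only point requiring care is the weight asymmetry $\lambda\geq\mu$, which one must invoke each time the two weighted powers of $e^t/q$ (or its reciprocal) appear together, in order to pick the correct extremal exponent.
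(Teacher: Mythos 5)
Your proof is correct and follows essentially the same route as the paper: Mahler's compactness criterion, the explicit computation of $g_t u_{x,y,z}^{-1}(p,r,q)^T$, optimizing $t$ by setting $e^{-t}q=\delta/2$ in the dynamics-to-Diophantine direction (yielding $\epsilon=2^{-\lambda}\delta^{1+\lambda}$), and controlling the case $q=0$ separately. One small point in your favour: in the converse direction your choice $\delta=\min\{1,\epsilon^{1/(1+\mu)}\}$ is the one that actually closes the estimate, whereas the paper takes $\delta=\min\{\epsilon^{1/(1+\lambda)},1\}$ and then writes $\max\{\delta^{1+\lambda},\delta^{1+\mu}\}=\delta^{1+\lambda}$, a slip since $\lambda\geq\mu$ and $\delta\leq1$ make $\delta^{1+\mu}$ the larger of the two; the exponent $1+\mu$ you use is the correct one.
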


\begin{proof}
The lemma is a special case of \cite[Theorem 2.5]{Kl}. We repeat the proof for completeness. Let $g_t$ be as in  \equ{sldgtweights}.
By Mahler's compactness criterion, $F^+_{\lambda,\mu}u_{x,y,z}^{-1}\Gamma$ is bounded if and only if there is $\delta\in(0,1]$ such that
$\|g_tu_{x,y,z}^{-1}(p,r,q)^T\|_\infty\ge\delta$ for any $t\ge0$ and $(p,r,q)\in\ZZ^3\setminus\{\textbf{0}\}$,
where $\|\cdot\|_\infty$ is the sup-norm and the superscript ``$T$" denotes the transpose. A straightforward calculation shows that
$$g_tu_{x,y,z}^{-1}(p,r,q)^T=\left(e^{\lambda t}\big(p-qx-z(r-qy)\big),e^{\mu t}(r-qy),e^{-t}q\right)^T.$$
Thus $F^+_{\lambda,\mu}u_{x,y,z}^{-1}\Gamma$ is bounded if and only if there is $\delta\in(0,1]$ such that
\begin{equation}\label{E:diophantine1}
\max\left\{e^{\lambda t}|qx-p-z(qy-r)|,e^{\mu t}|qy-r|,e^{-t}|q|\right\}\ge\delta, \qquad \forall \ t\ge0, \ (p,r,q)\in\ZZ^3\setminus\{\textbf{0}\}.
\end{equation}

Suppose that there is $\delta\in(0,1]$ such that \eqref{E:diophantine1} holds. We claim that \eqref{E:diophantine} holds for $\epsilon=2^{-\lambda}\delta^{1+\lambda}$. In fact, for $(p,r,q)\in\ZZ^2\times\NN$, let $t_0\ge0$ be such that $e^{-t_0}q=\delta/2$. Then \eqref{E:diophantine1} implies that
\begin{align*}
& \ \max\left\{q^\lambda|qx-p-z(qy-r)|,q^\mu|qy-r|\right\}\\
\ge & \ 2^{-\lambda}\delta^\lambda\max\left\{e^{\lambda t_0}|qx-p-z(qy-r)|,e^{\mu t_0}|qy-r|\right\}\\
\ge & \ 2^{-\lambda}\delta^\lambda\cdot\delta=\epsilon.
\end{align*}
This verifies the claim.

Conversely, suppose that there is $\epsilon>0$ such that \eqref{E:diophantine} holds. We prove that \eqref{E:diophantine1} holds for $\delta=\min\{\epsilon^{\frac{1}{1+\lambda}},1\}$. Suppose not. Then there exist $(p,r,q)\in\ZZ^3\setminus\{\textbf{0}\}$ and $t\ge0$ such that
$$e^{\lambda t}|qx-p-z(qy-r)|<\delta, \qquad e^{\mu t}|qy-r|<\delta, \qquad e^{-t}|q|<\delta.$$
By replacing $(p,r,q)$ with $-(p,r,q)$ if necessary, we may assume that $q\ge0$.
If $q=0$, then $e^{\mu t}|r|<\delta$, which implies that $r=0$. In turn, we have $e^{\lambda t}|p|<\delta$, which implies that $p=0$. This is impossible. Thus $q>0$. It follows that
\begin{align*}
& \ \max\{q^\lambda|qx-p-z(qy-r)|,q^\mu|qy-r|\}\\
= & \ \max\{(e^{-t}q)^\lambda\cdot e^{\lambda t}|qx-p-z(qy-r)|,(e^{-t}q)^\mu\cdot e^{\mu t}|qy-r|\}\\
< & \ \max\{\delta^{1+\lambda},\delta^{1+\mu}\}=\delta^{1+\lambda}\le\epsilon.
\end{align*}
This contradicts \eqref{E:diophantine}. Thus, the proof of the lemma is complete.
\end{proof}

In view of Lemma \ref{L:diophantine} and Lemma \ref{L:HAW-mnfd} (iii), to prove Theorem \ref{T:HAW_U}, it suffices to prove that the set $S(\lambda,\mu)$ is HAW.
We begin with the following simple lemma, which is a variant of \cite[Lemma 1]{BPV}.

\begin{lemma}\label{L:BPV}
Let $z\in\RR$. For any $(p,r,q)\in\ZZ^2\times\NN$, there exists $(a,b,c)\in\ZZ^3$ with $(a,b)\ne(0,0)$ such that $ap+br+cq=0$ and $|a|\le q^\lambda$, $|b+za|\le q^\mu$.
\end{lemma}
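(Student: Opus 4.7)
The plan is to reduce the three-variable problem to a two-variable lattice-point problem and then apply Minkowski's theorem on convex bodies. The point is that once $(a,b) \in \ZZ^2$ is chosen, the equation $ap+br+cq=0$ forces $c = -(ap+br)/q$, so the only real constraint on $(a,b)$ coming from $c$ is the divisibility condition $q \mid ap+br$.

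Accordingly, I would first introduce the sublattice
$$\Lambda_0 := \{(a,b) \in \ZZ^2 : ap + br \equiv 0 \pmod q\} \subset \ZZ^2,$$
which is the kernel of the homomorphism $\ZZ^2 \to \ZZ/q\ZZ$, $(a,b) \mapsto ap+br \bmod q$. Since $\Lambda_0$ has index at most $q$ in $\ZZ^2$, its covolume in $\RR^2$ is at most $q$. Next, I would introduce the closed region
$$R := \{(a,b) \in \RR^2 : |a| \le q^\lambda,\ |b+za| \le q^\mu\},$$
which is a centrally symmetric convex parallelogram. A direct computation (using $\lambda+\mu=1$) gives
$$\mathrm{Area}(R) = (2q^\lambda)(2q^\mu) = 4q \ge 2^2 \cdot \mathrm{covol}(\Lambda_0).$$

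By Minkowski's theorem applied to $R$ and the lattice $\Lambda_0$, there exists a nonzero point $(a,b) \in \Lambda_0 \cap R$. Setting $c := -(ap+br)/q \in \ZZ$ then produces a triple $(a,b,c) \in \ZZ^3$ satisfying the required conditions: $(a,b) \ne (0,0)$, $ap+br+cq=0$, $|a| \le q^\lambda$, and $|b+za| \le q^\mu$. There is no substantive obstacle here; the only thing that needs a moment of care is the area computation (the linear shear $(a,b) \mapsto (a, b+za)$ has determinant one, so the image of $R$ under this shear is exactly the axis-aligned rectangle $[-q^\lambda,q^\lambda] \times [-q^\mu,q^\mu]$) and the invocation of Minkowski in its closed-region form so that the non-strict inequalities $|a| \le q^\lambda$, $|b+za| \le q^\mu$ are obtained. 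The role $z$ plays is only through the shear used to define $R$; that is why the conclusion involves $|b+za|$ rather than $|b|$.
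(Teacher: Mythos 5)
Your proof is correct, but it takes a different route than the paper. The paper applies Minkowski's linear forms theorem directly in three variables $(a,b,c)$ to the forms $ap+br+cq$, $a$, and $b+za$: the determinant of this system equals $q$, the product of the bounds $1\cdot q^\lambda\cdot q^\mu$ also equals $q$, so there is a nonzero integer triple with $|ap+br+cq|<1$ (hence $=0$), $|a|\le q^\lambda$, $|b+za|\le q^\mu$; then $(a,b)\ne(0,0)$ follows because $q\ne 0$. You instead eliminate $c$ first, restrict to the index-$\le q$ sublattice $\Lambda_0\subset\ZZ^2$ cut out by the congruence $ap+br\equiv 0\pmod q$, and apply the two-dimensional Minkowski convex body theorem (in its closed form) to the sheared parallelogram $R$. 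Both arguments are short and correct; the paper's is essentially a one-liner once one knows the linear forms theorem, while yours is slightly more hands-on (you must separately verify $\mathrm{covol}(\Lambda_0)\le q$ and compute $\mathrm{Area}(R)=4q$ via the unimodular shear), but it has the minor advantage of making the two-dimensional geometry more transparent and of not needing the full three-dimensional statement. One small point worth flagging in your write-up: be precise that equality $\mathrm{Area}(R)=4\,\mathrm{covol}(\Lambda_0)$ can occur (when the congruence map is surjective onto $\ZZ/q\ZZ$), which is exactly why the compact/closed form of Minkowski is needed — you did note this, so the proof is complete as stated.
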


\begin{proof}
By Minkowski's linear forms theorem, there exist $a,b,c\in\ZZ$, not all zero, such that
$$|ap+br+cq|<1, \qquad |a|\le q^\lambda, \qquad |b+za|\le q^\mu.$$
Since $ap+br+cq\in\ZZ$, it must be $0$. If $a=b=0$, then it follows from $q\ne0$ that $c=0$, a contradiction. Thus $(a,b)\ne(0,0)$. This completes the proof.
\end{proof}

We now introduce some notation.
For a closed ball $B\subset\RR^3$ and $\bv=(p,r,q)\in\ZZ^2\times\NN$, consider the set of integral vectors
\begin{align*}
\cW(B,\bv):=\big\{(a,b,c)\in\ZZ^3: \ &  (a,b)\ne(0,0), ap+br+cq=0,\\
& |a|\le q^\lambda, |b+z_B a|\le q^\mu+\rho(B)^{\frac{1}{2}}\big\},
\end{align*}
where $z_B$ is the $z$-coordinate of the center of $B$. It follows from Lemma \ref{L:BPV} that $\cW(B,\bv)\ne\emptyset$ (the extra term $\rho(B)^{\frac{1}{2}}$ will be important for establishing Lemma \ref{L:main1} below). We choose and fix $$\bw(B,\bv)=\big(a(B,\bv),b(B,\bv),c(B,\bv)\big)\in\cW(B,\bv)$$ such that
\begin{align}
&\ \max\big\{|a(B,\bv)|,|b(B,\bv)+z_B a(B,\bv)|\big\} \notag\\
=&\ \min\left\{\max\big\{|a|,|b+z_B a|\big\}:(a,b,c)\in\cW(B,\bv)\right\}, \label{E:max}
\end{align}
and define
$$H_B(\bv):=q\max\big\{|a(B,\bv)|,|b(B,\bv)+z_B a(B,\bv)|\big\}.$$

\begin{lemma}\label{L:q}
For any closed ball $B\subset\RR^3$ and $\bv=(p,r,q)\in\ZZ^2\times\NN$, we have
\begin{equation}\label{E:qq}
q\le H_B(\bv)\le q^{1+\lambda}.
\end{equation}
\end{lemma}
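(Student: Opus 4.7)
The quantity $H_B(\bv)$ is defined as $q$ times the minimum over $\cW(B,\bv)$ of $\max\{|a|,|b+z_Ba|\}$, so proving \equ{qq} reduces to bounding this minimum below by $1$ and above by $q^\lambda$.

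For the upper bound, the plan is to exhibit an explicit element of $\cW(B,\bv)$ whose max is at most $q^\lambda$, and then invoke the minimality condition \eqref{E:max}. Lemma \ref{L:BPV}, applied with $z=z_B$, produces exactly such a witness: integers $(a,b,c)$ with $(a,b)\ne(0,0)$, $ap+br+cq=0$, $|a|\le q^\lambda$, and $|b+z_Ba|\le q^\mu$. This vector lies in $\cW(B,\bv)$ because $q^\mu\le q^\mu+\rho(B)^{1/2}$, and since $\lambda\ge\mu$ and $q\ge 1$ we have $\max\{|a|,|b+z_Ba|\}\le\max\{q^\lambda,q^\mu\}=q^\lambda$. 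By the minimality in \eqref{E:max}, the same upper bound passes to $(a(B,\bv),b(B,\bv),c(B,\bv))$, which multiplied by $q$ yields $H_B(\bv)\le q^{1+\lambda}$.

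For the lower bound, I split on whether $a(B,\bv)$ vanishes. If $a(B,\bv)\ne 0$, then $|a(B,\bv)|\ge 1$ because $a(B,\bv)\in\ZZ$, so the max is at least $1$. If $a(B,\bv)=0$, then $b(B,\bv)\ne 0$ because $(a(B,\bv),b(B,\bv))\ne(0,0)$ by definition of $\cW(B,\bv)$, and in this case $|b(B,\bv)+z_Ba(B,\bv)|=|b(B,\bv)|\ge 1$ since $b(B,\bv)\in\ZZ$. Either way, the max is at least $1$, giving $H_B(\bv)\ge q$.

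There is no real obstacle here: the lemma is a bookkeeping statement about the definition of $H_B$. The only subtlety worth flagging is that $|b(B,\bv)+z_B a(B,\bv)|$ is generally not an integer, so the lower bound genuinely needs the case split to an integer quantity ($|a|$ or $|b|$); the condition $(a,b)\ne(0,0)$ built into $\cW(B,\bv)$ is precisely what makes this split succeed. The hypothesis $\lambda\ge\mu$ enters only in the upper bound, where it lets me simplify $\max\{q^\lambda,q^\mu\}$ to $q^\lambda$.
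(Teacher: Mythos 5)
Your proof is correct and follows essentially the same route as the paper: the upper bound comes from Lemma \ref{L:BPV} applied with $z = z_B$ to exhibit a witness in $\cW(B,\bv)$ and then invoking the minimality in \eqref{E:max}, which is exactly the paper's argument. The paper dismisses the lower bound as obvious; your case split on $a(B,\bv)=0$ versus $a(B,\bv)\ne 0$ is a reasonable way to make explicit why $(a,b)\ne(0,0)$ forces $\max\{|a|,|b+z_Ba|\}\ge 1$.
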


\begin{proof}
The first inequality is obvious. By Lemma \ref{L:BPV}, $\cW(B,\bv)$ contains a vector $(a_0,b_0,c_0)$ with $|b_0+z_B a_0|\le q^\mu$. Thus, it follows from \eqref{E:max} that
$$\max\big\{|a(B,\bv)|,|b(B,\bv)+z_B a(B,\bv)|\big\}\le\max\big\{|a_0|,|b_0+z_B a_0|\big\}\le\max\{q^\lambda,q^\mu\}=q^\lambda.$$
Hence the second inequality.
\end{proof}

Now let $B_0\subset\RR^3$ be a closed ball of radius $\rho_0\le1$. Let $\kappa>1$ be such that
\begin{equation}\label{E:kappa}
\max\{|x|,|y|,|z|\}\le \kappa-1, \qquad \forall \ (x,y,z)\in B_0.
\end{equation}
Let $\beta\in(0,1)$, and $R$ and $\epsilon$ be positive numbers such that
\begin{equation}\label{E:R}
R\ge\max\{4\beta^{-1},10^7\kappa^4\}
\end{equation}
and
\begin{equation}\label{E:epsilon}
\epsilon\le 10^{-2}\kappa^{-2}R^{-10}\rho_0.
\end{equation}
Let $\sB_0=\{B_0\}$.
For $n\ge1$, let $\sB_n$ be the family of closed balls defined by
$$\sB_n:=\{B\subset B_0:\beta R^{-n}\rho_0<\rho(B)\le R^{-n}\rho_0\}.$$
For a closed ball $B$, if
\begin{equation}\label{E:condition0}
\text{$B\in\sB_n$ for some $n\ge0$,}
\end{equation}
we define
$$\cV_B:=\{\bv\in\ZZ^2\times\NN: H_n\le H_B(\bv)\le 2H_{n+1}\},$$
where
$$H_n=3\epsilon \kappa \rho_0^{-1}R^n, \qquad n\ge0.$$
Note that since $R>\beta^{-1}$, the families $\sB_n$ are pairwise disjoint. So \eqref{E:condition0} is satisfied for at most one integer $n$, and hence $\cV_B$ is well-defined.
It follows from \eqref{E:qq} that if $\bv=(p,r,q)\in\cV_B$, then
\begin{equation}\label{E:q}
H_n^{\frac{1}{1+\lambda}}\le q\le 2H_{n+1}.
\end{equation}
Whenever \eqref{E:condition0} is satisfied, we also define
$$\cV_{B,1}:=\{(p,r,q)\in\cV_B: H_n^{\frac{1}{1+\lambda}}\le q\le H_n^{\frac{1}{1+\lambda}}R^8\}$$
and
$$\cV_{B,k}:=\{(p,r,q)\in\cV_B: H_n^{\frac{1}{1+\lambda}}R^{2k+4}\le q\le H_n^{\frac{1}{1+\lambda}}R^{2k+6}\}, \qquad k\ge2.$$

\begin{lemma}\label{L:part}
If $B\in\sB_n$, then $\cV_B=\bigcup_{k=1}^n\cV_{B,k}$.
\end{lemma}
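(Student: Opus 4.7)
The plan is a direct interval-covering argument on the $q$-coordinate. The inclusion $\bigcup_{k=1}^n \cV_{B,k} \subseteq \cV_B$ is automatic from the definitions, so the real content is to show that every $\bv=(p,r,q)\in\cV_B$ lies in at least one $\cV_{B,k}$.

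First, I would invoke \eqref{E:q}, which says $H_n^{1/(1+\lambda)} \le q \le 2H_{n+1}$ for any $\bv\in\cV_B$. Next, I would read off the $q$-intervals cut out by $\cV_{B,1},\ldots,\cV_{B,n}$: they are the consecutive closed intervals $[H_n^{1/(1+\lambda)},\,H_n^{1/(1+\lambda)}R^8]$, $[H_n^{1/(1+\lambda)}R^8,\,H_n^{1/(1+\lambda)}R^{10}]$, \ldots, $[H_n^{1/(1+\lambda)}R^{2n+4},\,H_n^{1/(1+\lambda)}R^{2n+6}]$. These share endpoints and thus tile the single closed interval $[H_n^{1/(1+\lambda)},\,H_n^{1/(1+\lambda)}R^{2n+6}]$. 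So membership in some $\cV_{B,k}$ will follow as soon as this tiled interval contains $[H_n^{1/(1+\lambda)},\,2H_{n+1}]$, i.e., as soon as
$$2H_{n+1}\le H_n^{1/(1+\lambda)}R^{2n+6}.$$

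Substituting $H_n=3\epsilon\kappa\rho_0^{-1}R^n$ and simplifying, this reduces to
$$2(3\epsilon\kappa\rho_0^{-1})^{\lambda/(1+\lambda)}\le R^{n/(1+\lambda)+n+5}.$$
By \eqref{E:epsilon} one has $3\epsilon\kappa\rho_0^{-1}\le 3\cdot10^{-2}\kappa^{-1}R^{-10}\le 1$, so the left-hand side is at most $2$; the right-hand side is at least $R^5$, and $R\ge 10^7\kappa^4$ by \eqref{E:R}, so the inequality holds with enormous room to spare. The degenerate case $n=0$ is handled separately by observing that then $2H_1 = 6\epsilon\kappa\rho_0^{-1}R<1\le q$ (again using \eqref{E:epsilon}), so $\cV_{B_0}=\varnothing$ and the empty union $\bigcup_{k=1}^0\cV_{B,k}$ matches trivially.

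The whole argument is essentially bookkeeping, so the main ``obstacle'' is simply verifying that the specific exponents baked into the definitions of the $\cV_{B,k}$ (the special base case $R^8$ for $k=1$ and the uniform multiplicative width $R^2$ for $k\ge 2$) dovetail correctly with the quantitative conditions \eqref{E:R} and \eqref{E:epsilon}. Once this arithmetic is recorded, the decomposition of $\cV_B$ into the $\cV_{B,k}$ is immediate.
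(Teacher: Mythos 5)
Your argument is correct and rests on the same arithmetic inequality as the paper's: comparing $2H_{n+1}$ against $H_n^{1/(1+\lambda)}$ times a power of $R$, using \eqref{E:R} and \eqref{E:epsilon}. The only cosmetic difference is that you argue directly (the intervals for $k=1,\dots,n$ cover the admissible $q$-range from \eqref{E:q}), whereas the paper argues by contradiction (showing $\cV_{B,k}=\emptyset$ for $k>n$); these are contrapositive formulations of the same observation, and your explicit treatment of $n=0$ is a harmless extra.
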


\begin{proof}
In view of \eqref{E:q}, it suffices to show that if $k>n$ then $\cV_{B,k}=\emptyset$. Suppose to the contrary that $\cV_{B,k}\ne\emptyset$ for some $k>n$. Let $(p,r,q)\in\cV_{B,k}$. Then
$$2H_{n+1}\ge q\ge H_n^{\frac{1}{1+\lambda}}R^{2k+4}> H_n^{\frac{1}{1+\lambda}}R^{2n+4}.$$
But
$$\frac{2H_{n+1}}{H_n^{\frac{1}{1+\lambda}}R^{2n+4}}=2(3\epsilon\kappa \rho_0^{-1})^{\frac{\lambda}{1+\lambda}}R^{-\frac{2+\lambda}{1+\lambda}n-3}<1,$$
a contradiction.
\end{proof}

Next, we inductively define a subfamily $\sB'_n$ of $\sB_n$ as follows. Let $\sB'_0=\{B_0\}$. If $n\ge1$ and $\sB'_{n-1}$ has been defined, we let
\begin{equation}\label{E:B'}
\sB'_n:=\Big\{B\in\sB_n:B\subset B' \text{ for some } B'\in\sB'_{n-1},  \text{ and } B\cap\bigcup_{\bv\in\cV_B}\Delta_\epsilon(\bv)=\emptyset\Big\}.
\end{equation}

The following two lemmas concerning $\sB'_n$ are key steps in the proof of Theorem \ref{T:HAW_U}. Their proofs are technical and postponed to the next two sections.

\begin{lemma}\label{L:main1}
Let $n\ge0$, $B\in\sB'_n$, $\bv=(p,r,q)\in\ZZ^2\times\NN$. If $q^{1+\lambda}\le 2H_{n+1}$, then $\Delta_\epsilon(\bv)\cap B=\emptyset$.
\end{lemma}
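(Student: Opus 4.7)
The natural plan is induction on $n$. For $n=0$ -- and more generally for every $n$ with $2H_{n+1}<1$ -- the hypothesis $q^{1+\lambda}\le 2H_{n+1}$ forces $q<1$, which is incompatible with $q\in\NN$, so the statement is vacuous. This is consistent with the bounds \eqref{E:R} and \eqref{E:epsilon}, which make $H_n$ genuinely tiny for small $n$.

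For the inductive step, fix $n\ge 1$, assume the statement for all smaller indices, and let $B\in\sB_n'$ have parent $B'\in\sB_{n-1}'$. The argument splits according to the size of $q$. If $q^{1+\lambda}\le 2H_n$, then the induction hypothesis applied at level $n-1$ to the ball $B'$ yields $\Delta_\epsilon(\bv)\cap B'=\emptyset$, and the containment $B\subset B'$ gives the desired conclusion. Otherwise
\[
2H_n \ <\ q^{1+\lambda}\ \le\ 2H_{n+1},
\]
and the plan is to show $\bv\in\cV_B$; the definition \eqref{E:B'} of $\sB_n'$ then gives $\Delta_\epsilon(\bv)\cap B=\emptyset$ automatically. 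The upper bound $H_B(\bv)\le q^{1+\lambda}\le 2H_{n+1}$ is immediate from Lemma \ref{L:q} and the hypothesis, so the whole task reduces to establishing the lower bound $H_B(\bv)\ge H_n$.

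For this lower bound I would argue by contradiction. Assume $H_B(\bv)<H_n$ and let $(a,b,c)=\bw(B,\bv)$, so that both $|a|<H_n/q$ and $|b+z_Ba|<H_n/q$. Combined with the relation $ap+br+cq=0$, this produces a short integer vector $(a,b,c)$ encoding a ``dual'' approximation of $\bv$ whose weighted height is strictly smaller than both $q^\lambda$ and $q$ (the latter because $q>(2H_n)^{1/(1+\lambda)}$ in the current case). The strategy is to convert this short dual into a genuine auxiliary vector $\bv'=(p',r',q')\in\ZZ^2\times\NN$ with $(q')^{1+\lambda}\le 2H_n$ whose associated slab $\Delta_\epsilon(\bv')$ nonetheless meets $B$, and hence meets the enclosing ball $B'\in\sB_{n-1}'$, contradicting the induction hypothesis at level $n-1$.

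The main obstacle lies precisely in turning the abstract duality into a concrete $\bv'$. One has to pin down a positive integer last coordinate, verify that the weighted size of $\bv'$ falls into the range handled by the inductive hypothesis, and control the errors arising from varying the centre $z_B$ between parent and child balls. The slack term $\rho(B)^{1/2}$ inserted into the definition of $\cW(B,\bv)$ was put there precisely to allow the Minkowski approximating vector to be transferred across consecutive scales without being expelled from $\cW$, and the lower bound on $R$ in \eqref{E:R} together with the smallness condition \eqref{E:epsilon} are calibrated so that the geometric error terms accumulated along the chain $B_0\supset\cdots\supset B$ remain strictly smaller than $H_n$ at every level. Making all of this quantitatively airtight -- in particular verifying that the derived $\bv'$ has genuinely positive integer last coordinate and exactly the Diophantine size required to trigger the inductive hypothesis -- is the principal technical challenge.
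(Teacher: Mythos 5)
Your base-case observation (for small $n$ the statement is vacuous because $2H_{n+1}<1$) is correct, and so is the easy case: if $q^{1+\lambda}\le 2H_n$ then the claim for $B$ follows from the claim for its parent $B'\in\sB'_{n-1}$ together with $B\subset B'$. But the remaining plan has a genuine gap. You propose to handle $2H_n<q^{1+\lambda}\le 2H_{n+1}$ by showing $\bv\in\cV_B$, which reduces to the lower bound $H_B(\bv)\ge H_n$, and you try to establish that bound by a Diophantine transference manufacturing a new primal vector $\bv'$ with small $q'$. Neither half of this works. First, the lower bound $H_B(\bv)\ge H_n$ is simply not true in general: by Lemma~\ref{L:q} one only knows $H_B(\bv)\ge q$, and $q$ can be as small as about $(2H_n)^{1/(1+\lambda)}$, which for large $n$ is vastly smaller than $H_n$ (since $H_n\to\infty$). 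So the task does \emph{not} reduce to $\bv\in\cV_B$; one must allow that $\bv$ is ``invisible'' at level $n$. Second, the proposed construction of an auxiliary $\bv'$ from the short dual vector $\bw(B,\bv)$ is not a step anyone carries out here, and there is no evident mechanism for it: $\bw(B,\bv)$ annihilates $\bv$ and controls the slab geometrically, but it does not encode a rational point with small denominator whose $\Delta_\epsilon$-slab meets $B$.

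The paper's actual argument is structurally different. It takes the whole chain $B=B_n\subset B_{n-1}\subset\cdots\subset B_0$ with $B_k\in\sB'_k$, assumes for contradiction that $\Delta_\epsilon(\bv)$ meets every $B_k$, and tracks the \emph{same} vector $\bv$ through the changing heights $H_{B_k}(\bv)$. The core estimate is $H_{B_k}(\bv)\le 2H_{B_{k+1}}(\bv)$: passing to a larger ball can at most double the height, because the chosen witness $\bw(B_{k+1},\bv)$ stays inside $\cW(B_k,\bv)$ thanks to the slack term $\rho(B)^{1/2}$ (which you correctly identified as crucial, but for a different purpose). Combined with the definition of $\sB'_k$, which forbids $H_{B_k}(\bv)$ from landing in $[H_k,2H_{k+1}]$, this doubling bound cascades: starting from $H_{B_{n_0}}(\bv)<H_{n_0}$ at the level $n_0$ where $2H_{n_0}\le q^{1+\lambda}\le 2H_{n_0+1}$, one gets $H_{B_k}(\bv)<H_k$ all the way down to $k=1$, where $H_{B_1}(\bv)<H_1<1$ contradicts $H_{B_1}(\bv)\ge q\ge 1$. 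In short, the correct target is ``$\bv\in\cV_{B_k}$ for \emph{some} ancestor $B_k$, or else a contradiction at level $1$,'' not ``$\bv\in\cV_B$ for the ball at hand,'' and the mechanism is controlled drift of $H_{B_k}(\bv)$ along the chain rather than a transference to a new Diophantine vector.
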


\begin{lemma}\label{L:main2}
Let $n\ge0$, $B\in\sB'_n$, and $k\ge 1$. There exists an affine plane $L_k(B)\subset\RR^3$ such that for any $B'\in\sB_{n+k}$ with $B'\subset B$ and any $\bv\in\cV_{B',k}$, we have that $$\Delta_\epsilon(\bv)\cap B'\subset L_k(B)^{(R^{-(n+k)}\rho_0)}.$$
\end{lemma}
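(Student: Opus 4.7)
The plan is to associate to each $\bv = (p,r,q) \in \cV_{B',k}$ the dual integer vector $\bw(B',\bv) = (a,b,c)$ and to show that the affine plane $\Pi_\bv := \{(x,y,z) \in \RR^3 : ax + by + c = 0\}$ is, up to a common $R^{-(n+k)}\rho_0$-tube, independent of $B'$ and $\bv$.

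I would first establish the slab estimate for a single $\bv$. The relation $ap + br + cq = 0$ yields the identity
\[
q(ax + by + c) = aq\bigl(x - \tfrac{p}{q} - z(y - \tfrac{r}{q})\bigr) + (b + z a)(qy - r),
\]
and combining it with the defining inequalities of $\Delta_\epsilon(\bv)$, the bounds $|a| \le q^\lambda$ and $|b + z_{B'} a| \le q^\mu + \rho(B')^{1/2}$ coming from $\bw(B',\bv) \in \cW(B',\bv)$, and $|z - z_{B'}| \le \rho(B')$ for $(x,y,z) \in B'$, should give $|q(ax+by+c)| \le 3\epsilon$ (the correction terms being controlled by \eqref{E:epsilon}). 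Since $\|(a,b)\|_\infty \ge \max(|a|, |b+z_{B'}a|)/(1+|z_{B'}|) \ge H_{n+k}/(\kappa q)$ and $H_{n+k} = 3\epsilon\kappa\rho_0^{-1}R^{n+k}$, this yields $\dist\bigl((x,y,z), \Pi_\bv\bigr) < R^{-(n+k)}\rho_0$ on $\Delta_\epsilon(\bv)\cap B'$.

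The heart of the proof is to show that $\Pi_\bv$ is essentially independent of $B'$ and $\bv$. Concretely, I aim to produce a primitive integer vector $\bw^*_k(B) = (a^*,b^*,c^*)$, depending only on $B$ and $k$, such that every $\bw(B',\bv)$ (for $B' \in \sB_{n+k}$ with $B' \subset B$ and $\bv \in \cV_{B',k}$) is an integer multiple of $\bw^*_k(B)$, and set $L_k(B) := \{(x,y,z) : a^* x + b^* y + c^* = 0\}$. To produce $\bw^*_k(B)$, I analyse the warped norm $N_B(a,b) := \max(|a|, |b + z_B a|)$ on $\ZZ^2$. The ranges defining $\cV_{B',k}$, combined with $|z_{B'} - z_B| \le \rho(B) \le R^{-n}\rho_0$ and the shift $|b + z_B a| \le |b + z_{B'} a| + |a||z_B - z_{B'}|$, should confine $N_B(a,b)$ to a narrow window on the order of $H_{n+k}^{\lambda/(1+\lambda)}R^{-2k}$, in which a primitive vector realising the first successive minimum of $N_B$ is the natural candidate. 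Uniqueness is then enforced by ruling out a pair of non-proportional $\bw_1, \bw_2$ in this regime: from such a pair one manufactures an auxiliary $\bv^* \in \ZZ^2\times\NN$, essentially a primitive form of the cross product $\bw_1 \times \bw_2$, whose $q^*$-coordinate satisfies $(q^*)^{1+\lambda} \le 2H_{n+1}$, so Lemma \ref{L:main1} forces $\Delta_\epsilon(\bv^*) \cap B = \emptyset$; on the other hand, the joint near-vanishing conditions attached to $\bw_1, \bw_2$ should produce a point of $\Delta_\epsilon(\bv^*) \cap B$, a contradiction.

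The expected main obstacle is precisely this uniqueness step: verifying that $\bv^*$ really lands in the window triggering Lemma \ref{L:main1} while simultaneously $\Delta_\epsilon(\bv^*)$ meets $B$, uniformly across all admissible $B'$ and $\bv$. This is a delicate multiscale juggling act between $H_n$, $H_{n+k}$, the powers of $R$ in the definitions of $\cV_{B',k}$ and $\sB_{n+k}$, and the geometric slop from replacing $z_{B'}$ by $z_B$ in $N_B$; the explicit choices $R \ge 10^7\kappa^4$ and $\epsilon \le 10^{-2}\kappa^{-2}R^{-10}\rho_0$ in \eqref{E:R} and \eqref{E:epsilon} are presumably calibrated precisely to make this final contradiction close.
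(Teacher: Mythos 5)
Your overall plan is in the same spirit as the paper's argument: express the affine plane via the dual integer vector $\bw(B',\bv)=(a,b,c)$, prove a slab estimate using $ap+br+cq=0$ together with the bounds defining $\cW(B',\bv)$, and then establish that the direction of $\bw$ is essentially independent of $(B',\bv)$ by a cross-product contradiction that invokes Lemma~\ref{L:main1}. For $k\ge2$ this is precisely what the paper does (its Lemma~\ref{L:const}, Case~(2)), and your slab estimate is also essentially correct.

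However, there is a genuine gap in the uniqueness step for $k=1$, and it cannot be closed by calibrating constants. The issue is that the admissible set $\cV_{B',1}$ allows $q$ to range over a window of multiplicative width $R^8$ (from $H_{n+1}^{1/(1+\lambda)}$ to $H_{n+1}^{1/(1+\lambda)}R^8$), whereas $\cV_{B',k}$ for $k\ge2$ only allows a width-$R^2$ window sitting far to the right, at $R^{2k+4}$ to $R^{2k+6}$. As a consequence the quantity $N_B(a,b)=\max(|a|,|b+z_Ba|)=q^{-1}H_{B'}(\bv)$ is \emph{not} confined to a narrow window when $k=1$; it can be as large as about $H_{n+1}^{\lambda/(1+\lambda)}R$, and your cross-product bound for $q_0=|a_1b_2-a_2b_1|$ then acquires positive powers of $R$ rather than negative ones, so $(q_0)^{1+\lambda}\le2H_{n+1}$ is no longer forced. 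Worse, there is an actual degenerate scenario you cannot rule out: all the relevant $\bv$'s for $k=1$ can be positive multiples of a single primitive $\bv_0$. Then the orthogonality constraint $\bv\cdot\bw=0$ only confines $\bw$ to a rank-two sublattice of $\ZZ^3$, and different $B'$ can genuinely produce non-proportional dual vectors $\bw(B',\bv)$. So there is no single primitive $\bw^*_1(B)$, and the plane $\{a^*x+b^*y+c^*=0\}$ you propose simply does not exist. The paper resolves this by a dichotomy (its Lemma~\ref{L:const}): when $k=1$ and the linear span of the occurring $\bv$'s is two-dimensional, the dot products $\bv_i\cdot\bw_j$ are integers of absolute value $<1$ and hence zero, forcing the $\bw$'s into the one-dimensional orthogonal complement; but when that span is one-dimensional, it abandons the $\bw$-based plane entirely and instead takes $L_1(B)=\{(x,y,z):x-p_0/q_0-z_B(y-r_0/q_0)=0\}$, built directly from $\bv_0$ and the center height $z_B$, and verifies the slab containment by a separate computation. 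Your proposal is missing this second branch and would not survive it.
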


We now prove Theorem \ref{T:HAW_U} assuming the truth of Lemmas \ref{L:main1} and \ref{L:main2}.

\begin{proof}[Proof of Theorem \ref{T:HAW_U}]
As remarked above, it suffices to prove that the set $S(\lambda,\mu)$ defined in \eqref{E:S} is HAW. In turn, by
Lemma \ref{L:HPW}, it suffices to prove that $S(\lambda,\mu)$ is HPW. Let $\beta\in(0,1)$, $\gamma>0$. We prove that Alice has a strategy to win the $(\beta,\gamma)$-hyperplane potential game on $\RR^3$ with target set $S(\lambda,\mu)$. In the first round of the game, Bob chooses a closed ball $B_0\subset\RR^3$. By Remark \ref{R:to0}, we may without loss of generality assume that Bob will play so that $\rho_i:=\rho(B_i)\to0$. By letting Alice make dummy moves in the first several rounds and relabeling $B_i$, we may also assume that $\rho_0\le1$. Let $\kappa$ and $R$ be positive numbers satisfying \eqref{E:kappa} and \eqref{E:R}. We also require that
\begin{equation}\label{E:R2}
(R^\gamma-1)^{-1}\le (\beta^2/2)^\gamma.
\end{equation}
Let $\epsilon, \sB_n, \cV_B, \cV_{B,k}$ and $\sB'_n$ be as above. Then Lemmas \ref{L:part}--\ref{L:main2} hold.
Let Alice play according to the following strategy: Suppose that $i\ge0$, and Bob has chosen the closed ball $B_i$. If there is $n\ge0$ such that
\begin{equation}\label{E:condition}
\text{$B_i\in\sB'_n$, and $i$ is the smallest nonnegative integer with $B_i\in\sB_n$,}
\end{equation}
then Alice chooses the family of neighborhoods $\{L_k(B_i)^{(2R^{-(n+k)}\rho_0)}:k\in\NN\}$, where $L_k(B_i)$ are the planes given in Lemma \ref{L:main2}. Otherwise, Alice makes an arbitrary move. Since $B_i\in\sB_n$, we have that $\rho_i>\beta R^{-n}\rho_0$. This, together with \eqref{E:R2}, implies that
$$\sum_{k=1}^\infty(2R^{-(n+k)}\rho_0)^\gamma=(2R^{-n}\rho_0)^\gamma(R^\gamma-1)^{-1}\le(\beta \rho_i)^\gamma.$$
So Alice's move is legal. We prove that this guarantees a win for Alice, that is, the unique point $\bx_\infty$ in $\bigcap_{i=0}^\infty B_i$ lies in $$S(\lambda,\mu)\cup\bigcup_{n\in\cN}\bigcup_{k=1}^\infty L_k(B_{i_n})^{(2R^{-(n+k)}\rho_0)},$$ where
$$\cN:=\{n\ge0:\text{ there exists } i=i_n \text{ such that \eqref{E:condition} holds}\}.$$ There are two cases.

\medskip

\textbf{Case (1).} Assume $\cN=\NN\cup\{0\}$. For any $\bv=(p,r,q)\in\ZZ^2\times\NN$, there is $n$ such that $q^{1+\lambda}\le 2H_{n+1}$. Since $n\in\cN$, we have that $B_{i_n}\in\sB'_n$. Then Lemma \ref{L:main1} implies that $\Delta_\epsilon(\bv)\cap B_{i_n}=\emptyset$. Thus $\bx_\infty\notin\Delta_\epsilon(\bv)$. It then follows from \eqref{E:S-epsilon} and the arbitrariness of $\bv$ that $\bx_\infty\in S_\epsilon(\lambda,\mu)\subset S(\lambda,\mu)$. Hence Alice wins.

\medskip

\textbf{Case (2).} Assume $\cN\ne\NN\cup\{0\}$. Let $n$ be the smallest nonnegative integer with $n\notin\cN$. Since $B_0\in\sB'_0$, we have $0\in\cN$. Hence $n\ge1$. Since $\rho_i\to0$ and $\rho_{i+1}\ge\beta \rho_i$, there must exist $i\ge1$ with $B_i\in\sB_n$ and $B_{i-1}\notin\sB_n$. It follows that $B_i\notin\sB'_n$. Since $n-1\in\cN$, we have $B_{i_{n-1}}\in\sB'_{n-1}$. Thus it follows from \eqref{E:B'} that  $B_i\cap\bigcup_{\bv\in\cV_{B_i}}\Delta_\epsilon(\bv)\ne\emptyset$. By Lemma \ref{L:part}, there is $k\in\{1,\ldots,n\}$ and $\bv\in\cV_{B_i,k}$ such that $B_i\cap\Delta_\epsilon(\bv)\ne\emptyset$. Applying Lemma \ref{L:main2} to $B=B_{i_{n-k}}$ and $B'=B_i$, we obtain
$B_i\cap\Delta_\epsilon(\bv)\subset L_k(B_{i_{n-k}})^{(R^{-n}\rho_0)}$.
So $B_i\cap L_k(B_{i_{n-k}})^{(R^{-n}\rho_0)}\ne\emptyset$. In view of $\rho_i\le R^{-n}\rho_0$, it follows that
$\bx_\infty\in B_i\subset L_k(B_{i_{n-k}})^{(2R^{-n}\rho_0)}$. Hence Alice also wins.

\medskip

This completes the proof of Theorem \ref{T:HAW_U} modulo Lemmas \ref{L:main1} and \ref{L:main2}.
\end{proof}

\section{Proof of Lemma \ref{L:main1}}

We now prove Lemma \ref{L:main1}.
Note that $2H_1=6\epsilon\kappa \rho_0^{-1}R<1$. So we may assume that $n\ge1$. We denote $B_n=B$, and let $B_n\subset\cdots\subset B_0$ be such that $B_k\in\sB'_k$. Suppose to the contrary that the conclusion of the lemma is not true. Then there exists $\bv=(p,r,q)\in\ZZ^2\times\NN$ with $q^{1+\lambda}\le 2H_{n+1}$ such that $\Delta_\epsilon(\bv)\cap B_k\ne\emptyset$ for every $1\le k\le n$. It follows from the definition of $\sB'_k$ that $\bv\notin\cV_{B_k}$, that is,
\begin{equation}\label{E:notin}
H_{B_k}(\bv)\notin[H_k,2H_{k+1}], \qquad 1\le k\le n.
\end{equation}
Let $1\le n_0\le n$ be such that
\begin{equation}\label{E:n_0}
2H_{n_0}\le q^{1+\lambda}\le 2H_{n_0+1}.
\end{equation}
We inductively prove that
\begin{equation}\label{E:induc}
H_{B_k}(\bv)<H_k, \qquad 1\le k\le n_0.
\end{equation}
Since $H_{B_{n_0}}(\bv)\le q^{1+\lambda}\le 2H_{n_0+1}$,
it follows from \eqref{E:notin} that
\eqref{E:induc} holds for $k=n_0$. Suppose that $1\le k\le n_0-1$ and \eqref{E:induc} holds if $k$ is replaced by $k+1$.
We prove that
\begin{equation}\label{E:conti}
H_{B_k}(\bv)\le2H_{B_{k+1}}(\bv).
\end{equation}
Denote $\bw(B_k,\bv)=(a_k,b_k,c_k)$, $z_{B_k}=z_k$.
Firstly, we notice that
\begin{align}
&\ |b_{k+1}+z_ka_{k+1}|\notag\\
\le&\ |b_{k+1}+z_{k+1}a_{k+1}|+|a_{k+1}||z_k-z_{k+1}|\notag\\
\le&\ |b_{k+1}+z_{k+1}a_{k+1}|+|a_{k+1}|\rho(B_k). \label{E:abz}
\end{align}
Secondly, we verify that
\begin{equation}\label{E:k+1}
\bw(B_{k+1},\bv)\in\cW(B_k,\bv).
\end{equation}
Since $\bw(B_{k+1},\bv)\in\cW(B_{k+1},\bv)$, we trivially have that
$$(a_{k+1},b_{k+1})\ne(0,0), \qquad a_{k+1}p+b_{k+1}r+c_{k+1}q=0, \qquad |a_{k+1}|\le q^\lambda.$$
On the other hand, it follows from \eqref{E:abz} that
\begin{align*}
&\ |b_{k+1}+z_ka_{k+1}|\\
\le&\ q^\mu+\rho(B_{k+1})^{\frac{1}{2}}+q^{-1}H_{B_{k+1}}(\bv)\rho(B_k)\\
\le&\ q^\mu+(\beta R)^{-\frac{1}{2}}\rho(B_k)^{\frac{1}{2}}+H_{n_0}^{-\frac{1}{2}}H_{k+1}\rho(B_k) \qquad \text{(by \eqref{E:n_0} and the induction hypothesis)}\\
\le&\ q^\mu+\frac{1}{2}\rho(B_k)^{\frac{1}{2}}+(3\epsilon\kappa R)^{\frac{1}{2}}\rho(B_k)^{\frac{1}{2}} \\
\le&\ q^\mu+\rho(B_k)^{\frac{1}{2}}.
\end{align*}
Hence \eqref{E:k+1} holds. It then follows from \eqref{E:abz}, \eqref{E:k+1} and the definition of $\bw(B_k,\bv)$ that
\begin{align*}
H_{B_k}(\bv)&=q\max\{|a_k|,|b_k+z_ka_k|\}\\
&\le q\max\{|a_{k+1}|,|b_{k+1}+z_ka_{k+1}|\}\\
&\le2q\max\{|a_{k+1}|,|b_{k+1}+z_{k+1}a_{k+1}|\}\\
&=2H_{B_{k+1}}(\bv).
\end{align*}
This proves \eqref{E:conti}.

It follows from \eqref{E:conti} and the induction hypothesis that $H_{B_k}(\bv)\le 2H_{k+1}$.
By \eqref{E:notin}, we have $H_{B_k}(\bv)<H_k$.
Thus \eqref{E:induc} is proved.

By letting $k=1$ in \eqref{E:induc}, we get $H_{B_1}(\bv)<H_1<1$. But $H_{B_1}(\bv)\ge q\ge1$, a contradiction. This completes the proof of Lemma \ref{L:main1}.

\section{Proof of Lemma \ref{L:main2}}\label{S:pf_main}

In this section, we prove Lemma \ref{L:main2}. We first establish the following estimates.

\begin{lemma}\label{L:inequ}
Let $n\ge0$, $B\in\sB'_n$, $k\ge1$, and $B_j\in\sB_{n+k}$ with $B_j\subset B$, $j=1,2$.
Let $\bv_j=(p_j,r_j,q_j)\in\cV_{B_j,k}$ be such that $\Delta_\epsilon(\bv_j)\cap B\ne\emptyset$, and $\bw_j=\bw(B_j,\bv_j)$. Then
\begin{align}
|\bv_1\cdot\bw_2| & \le 6\epsilon R^{d_k}+72\epsilon\kappa^2\frac{q_1}{q_2}R^{k+1},\label{E:v1w2}\\
|\bv_2\cdot\bw_1| & \le 6\epsilon R^{d_k}+72\epsilon\kappa^2\frac{q_2}{q_1}R^{k+1},\label{E:v2w1}
\end{align}
where $$d_k=\begin{cases}
  8, & k=1,\\
  2, & k\ge2.
\end{cases}$$
\end{lemma}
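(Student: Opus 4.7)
By the symmetry of the hypotheses in the indices $1$ and $2$, it suffices to prove \eqref{E:v1w2}; \eqref{E:v2w1} then follows by swapping. The plan is to pick a test point and convert $\bv_1\cdot\bw_2$ into a sum of three concrete quantities, each of which can be estimated using the defining constraints on $\bv_j\in\cV_{B_j,k}$ and $\bw_j\in\cW(B_j,\bv_j)$. Choose $(x_1,y_1,z_1)\in\Delta_\epsilon(\bv_1)\cap B$ (nonempty by hypothesis) and set $\eta_1:=y_1-r_1/q_1$, $\xi_1:=x_1-p_1/q_1-z_1\eta_1$, so that $|\eta_1|<\epsilon/q_1^{1+\mu}$ and $|\xi_1|<\epsilon/q_1^{1+\lambda}$. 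Solving for $p_1=q_1x_1-q_1z_1\eta_1-q_1\xi_1$ and $r_1=q_1y_1-q_1\eta_1$ and substituting into $\bv_1\cdot\bw_2=a_2p_1+b_2r_1+c_2q_1$ yields the identity
\begin{equation*}
\bv_1\cdot\bw_2=q_1(a_2x_1+b_2y_1+c_2)-q_1a_2\xi_1-q_1(a_2z_1+b_2)\eta_1.
\end{equation*}

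For the two error terms involving $\xi_1$ and $\eta_1$, I would exploit the constraint $q_2\max\{|a_2|,|a_2z_{B_2}+b_2|\}=H_{B_2}(\bv_2)\le 2H_{n+k+1}$ (from $\bv_2\in\cV_{B_2}$) together with the lower bound $q_1^{1+\lambda}\ge H_{B_1}(\bv_1)\ge H_{n+k}$ (from $\bv_1\in\cV_{B_1}$ and Lemma \ref{L:q}). For example,
\begin{equation*}
|q_1a_2\xi_1|\le\frac{q_1}{q_2}\cdot\frac{q_2|a_2|}{q_1^{1+\lambda}}\,\epsilon\le\frac{q_1}{q_2}\cdot\frac{2H_{n+k+1}}{H_{n+k}}\,\epsilon=2\epsilon R\cdot\frac{q_1}{q_2}.
\end{equation*}
Expanding $a_2z_1+b_2=(a_2z_{B_2}+b_2)+a_2(z_1-z_{B_2})$ with $|z_1-z_{B_2}|\le 2\rho(B)\le 2R^{-n}\rho_0$, the third summand becomes $O(\epsilon\kappa^2(q_1/q_2)R^{k+1})$ by the same kind of manipulation, thereby fitting into the second term of \eqref{E:v1w2}.

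The main summand $q_1|a_2x_1+b_2y_1+c_2|$ is the most delicate. The plan is to introduce a second test point $(x_2,y_2,z_2)\in\Delta_\epsilon(\bv_2)\cap B$ and split
\begin{equation*}
a_2x_1+b_2y_1+c_2=\bigl(a_2x_2+b_2y_2+c_2\bigr)+a_2(x_1-x_2)+b_2(y_1-y_2).
\end{equation*}
The bracketed quantity equals $a_2\xi_2+(a_2z_2+b_2)\eta_2$ by using $\bv_2\cdot\bw_2=0$ together with the defining inequalities of $\Delta_\epsilon(\bv_2)$, and is bounded by a mirror version of the preceding step (with the roles of $1,2$ reversed). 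This yields a contribution of size at most $q_1\epsilon/q_2\le\epsilon R^{d_k}$, where the value of $d_k$ is read off from the window on $q_j$ imposed by $\bv_j\in\cV_{B_j,k}$: namely $q_1/q_2\le R^8$ when $k=1$ and $q_1/q_2\le R^2$ when $k\ge 2$. The leftover difference part is bounded by $(|a_2|+|b_2|)\cdot 2\rho(B)=O(\kappa H_{n+k+1}/q_2\cdot R^{-n}\rho_0)$, contributing another $O(\epsilon\kappa^2(q_1/q_2)R^{k+1})$ after multiplying by $q_1$. The main obstacle I anticipate is the arithmetic bookkeeping: systematically applying $\lambda+\mu=1$ to collapse mixed exponents such as $q^{1-\mu}\to q^{\lambda}$, tracking several interlocking powers of $R,\epsilon,\kappa,\rho_0$, and verifying that the accumulated constants can be absorbed into the specific numerical bounds $6$ and $72$ in the statement.
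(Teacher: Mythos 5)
Your proposal takes essentially the same approach as the paper's proof: both introduce a test point in each of $\Delta_\epsilon(\bv_1)\cap B$ and $\Delta_\epsilon(\bv_2)\cap B$, exploit the orthogonality $\bv_2\cdot\bw_2=0$ to rewrite $\bv_1\cdot\bw_2$ as a small combination, and then control each piece via the defining inequalities of $\Delta_\epsilon$, the size bounds $|a_2|\le q_2^\lambda$, $|b_2+z_{B_2}a_2|\le q_2^\mu+\rho(B_2)^{1/2}$, the $q$-window forced by $\cV_{B_j,k}$ (giving $q_1/q_2\le R^{d_k}$), and $\rho(B)\le R^{-n}\rho_0$. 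The paper organizes the estimate around $q_1^{-1}\bv_1\cdot\bw_2=(q_1^{-1}\bv_1-q_2^{-1}\bv_2)\cdot\bw_2$ and bounds the two fractional differences, whereas you expand in the auxiliary variables $\xi_j,\eta_j$ and split the main summand; these are equivalent algebraic rearrangements that lead to the same terms and the same final bookkeeping.
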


\begin{proof}
Let $(x_j,y_j,z_j)\in\Delta_\epsilon(\bv_j)\cap B$. Then
$$\Big|x_j-\frac{p_j}{q_j}-z_j\Big(y_j-\frac{r_j}{q_j}\Big)\Big|<\frac{\epsilon}{q_j^{1+\lambda}}, \qquad  \Big|y_j-\frac{r_j}{q_j}\Big|<\frac{\epsilon}{q_j^{1+\mu}}.$$
The latter inequality implies that
$$\Big|\frac{r_j}{q_j}\Big|\le|y_j|+\frac{\epsilon}{q_j^{1+\mu}}\le \kappa.$$
Thus
\begin{align*}
&\ \Big|\frac{p_1}{q_1}-\frac{p_2}{q_2}-z_{B_2}\Big(\frac{r_1}{q_1}-\frac{r_2}{q_2}\Big)\Big|\\
=&\ \Big|-\Big(x_1-\frac{p_1}{q_1}-z_1\Big(y_1-\frac{r_1}{q_1}\Big)\Big)+\Big(x_2-\frac{p_2}{q_2}-z_2\Big(y_2-\frac{r_2}{q_2}\Big)\Big)\\
& \qquad +(x_1-x_2)+\frac{r_1}{q_1}(z_1-z_{B_2})+\frac{r_2}{q_2}(z_{B_2}-z_2)-(y_1z_1-y_2z_2)\Big|\\
\le&\ \Big|x_1-\frac{p_1}{q_1}-z_1\Big(y_1-\frac{r_1}{q_1}\Big)\Big|+\Big|x_2-\frac{p_2}{q_2}-z_2\Big(y_2-\frac{r_2}{q_2}\Big)\Big|\\
& \qquad +|x_1-x_2|+\Big|\frac{r_1}{q_1}\Big||z_1-z_{B_2}|+\Big|\frac{r_2}{q_2}\Big||z_{B_2}-z_2|+|y_1||z_1-z_2|+|z_2|y_1-y_2|\\
\le&\ \frac{\epsilon}{q_1^{1+\lambda}}+\frac{\epsilon}{q_2^{1+\lambda}}+10\kappa \rho(B)
\end{align*}
and
$$
\Big|\frac{r_1}{q_1}-\frac{r_2}{q_2}\Big|=\Big|-\Big(y_1-\frac{r_1}{q_1}\Big)+\Big(y_2-\frac{r_2}{q_2}\Big)+(y_1-y_2)\Big|
\le\frac{\epsilon}{q_1^{1+\mu}}+\frac{\epsilon}{q_2^{1+\mu}}+2\rho(B).
$$
Let $\bw_j=(a_j,b_j,c_j)$. It follows that
\begin{align*}
q_1^{-1}|\bv_1\cdot\bw_2|=&\ |(q_1^{-1}\bv_1-q_2^{-1}\bv_2)\cdot\bw_2|\\
=&\ \Big|a_2\Big(\frac{p_1}{q_1}-\frac{p_2}{q_2}\Big)+b_2\Big(\frac{r_1}{q_1}-\frac{r_2}{q_2}\Big)\Big| \\
=&\ \Big|a_2\Big(\frac{p_1}{q_1}-\frac{p_2}{q_2}-z_{B_2}\Big(\frac{r_1}{q_1}-\frac{r_2}{q_2}\Big)\Big)
+(b_2+z_{B_2}a_2)\Big(\frac{r_1}{q_1}-\frac{r_2}{q_2}\Big)\Big| \\
\le&\ |a_2|\Big(\frac{\epsilon}{q_1^{1+\lambda}}+\frac{\epsilon}{q_2^{1+\lambda}}+10\kappa \rho(B)\Big)+|b_2+z_{B_2}a_2|\Big(\frac{\epsilon}{q_1^{1+\mu}}+\frac{\epsilon}{q_2^{1+\mu}}+2\rho(B)\Big) \\
\le&\ q_2^\lambda\Big(\frac{\epsilon}{q_1^{1+\lambda}}+\frac{\epsilon}{q_2^{1+\lambda}}\Big)+2q_2^\mu\Big(\frac{\epsilon}{q_1^{1+\mu}}+\frac{\epsilon}{q_2^{1+\mu}}\Big)
+12\kappa \rho(B)\max\big\{|a_2|,|b_2+z_{B_2}a_2|\big\}\\
\le&\ \epsilon q_1^{-1}\Big(\frac{q_2^\lambda}{q_1^\lambda}+\frac{q_1}{q_2}+2\frac{q_2^\mu}{q_1^\mu}+2\frac{q_1}{q_2}\Big)+12\kappa R^{-n}\rho_0\cdot q_2^{-1}H_{B_2}(\bv_2) \\
\le &\ 6\epsilon q_1^{-1}R^{d_k}+72\epsilon\kappa^2q_2^{-1}R^{k+1}.
\end{align*}
This proves \eqref{E:v1w2}. A similar argument verifies \eqref{E:v2w1}.
\end{proof}

For a closed ball $B\subset\RR^3$ and $\bv=(p,r,q)\in\ZZ^2\times\NN$, we consider the plane
$$L(B,\bv)=\{(x,y,z)\in\RR^3:a(B,\bv)x+b(B,\bv)y+c(B,\bv)=0\}.$$
The next lemma states that many pairs $(B,\bv)$ share the same plane $L(B,\bv)$.

\begin{lemma}\label{L:const}
Let $n\ge0$, $B\in\sB'_n$, $k\ge1$. Then either
\begin{itemize}
  \item[(i)] there is a plane $L_k(B)$ such that for any $B'\in\sB_{n+k}$ with $B'\subset B$, if $\bv\in\cV_{B',k}$ and $\Delta_\epsilon(\bv)\cap B\ne\emptyset$, then $L(B',\bv)=L_k(B)$, or
     \item[(ii)] $k=1$, and there is   $\bv_0=(p_0,r_0,q_0)\in\ZZ^2\times\NN$ with $H_{n+1}^{\frac{1}{1+\lambda}}\le q_0\le 2H_{n+2}$ such that for any $B'\in\sB_{n+1}$ with $B'\subset B$, if $\bv\in\cV_{B',1}$ and $\Delta_\epsilon(\bv)\cap B\ne\emptyset$, then $\bv=t\bv_0$ for some $t\ge1$.
\end{itemize}
\end{lemma}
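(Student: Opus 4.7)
The plan is to establish a pairwise dichotomy for any two qualifying triples $(B_1, \bv_1), (B_2, \bv_2)$ — meaning $B_j \in \sB_{n+k}$, $B_j \subset B$, $\bv_j = (p_j, r_j, q_j) \in \cV_{B_j, k}$, and $\Delta_\epsilon(\bv_j) \cap B \ne \emptyset$ — namely: either the planes $L(B_j, \bv_j)$ coincide, or the integer vectors $\bv_j \in \ZZ^3$ are linearly dependent. The lemma then follows by globalising this dichotomy across the entire qualifying family. Throughout, I set $\bw_j = \bw(B_j, \bv_j)$, so that $\bv_j \cdot \bw_j = 0$ by the definition of $\cW(B_j, \bv_j)$.

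The crucial first step is to bound, via Lemma \ref{L:inequ} combined with the parameter choices \eqref{E:R} and \eqref{E:epsilon}, the integer cross-products $|\bv_1 \cdot \bw_2|$ and $|\bv_2 \cdot \bw_1|$ strictly below $1$, forcing both to vanish. Here one uses the dyadic window $q_1/q_2 \le R^8$ for $k = 1$ (which matches $d_k = 8$) and $q_1/q_2 \le R^2$ for $k \ge 2$ (which matches $d_k = 2$), so that $6\epsilon R^{d_k} + 72\epsilon \kappa^2 (q_1/q_2) R^{k+1} < 1$. Once $\bv_j \cdot \bw_1 = \bv_j \cdot \bw_2 = 0$ for both $j = 1, 2$, the vectors $\bv_1, \bv_2$ sit in the 2-plane $\bw_1^\perp \cap \bw_2^\perp \subset \RR^3$: if $\bw_1 \not\parallel \bw_2$, this intersection is a line and $\bv_1 \parallel \bv_2$, while otherwise $\bw_1 \parallel \bw_2$ forces $L(B_1, \bv_1) = L(B_2, \bv_2)$.

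Globalising: either every two qualifying triples share the same plane, in which case I take $L_k(B)$ to be this common plane and we are in case (i); or some two triples have distinct planes, and then by the dichotomy their $\bv$'s are parallel, and a transitivity argument applied to any third triple shows that every qualifying $\bv$ is parallel to a fixed primitive lattice direction. To put this into the form of case (ii), I would choose $\bv_0$ to be the smallest positive integer multiple of this primitive with $q_0 \ge H_{n+1}^{1/(1+\lambda)}$; every qualifying $\bv$ is then equal to $t \bv_0$ with rational $t \ge 1$, and $q_0 \le q \le 2 H_{n+2}$ for any qualifying $\bv \in \cV_{B', 1}$ yields the remaining inequality $q_0 \le 2H_{n+2}$.

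The main obstacle — and the reason case (ii) is confined to $k = 1$ — is to rule out the parallel alternative when $k \ge 2$. I would argue by contradiction: if $\bv_1 = t \bv_2$ with $q_1 > q_2$ and both $\bv_j \in \cV_{B_j, k}$ for $k \ge 2$, the shared direction combined with $|z_{B_1} - z_{B_2}| \le \rho(B)$ and the $\rho(B_j)^{1/2}$ slack built into $\cW$ allow one to promote $\bw_2$ to an element of $\cW(B_1, \bv_1)$. The minimality in the definition of $\bw(B_1, \bv_1)$ then forces $H_{B_1}(\bv_1) \le q_1 \max\{|a_2|,\,|b_2 + z_{B_1} a_2|\}$, and combined with the compressed window $q_1/q_2 \le R^2$ prescribed by $\cV_{B_j, k}$ for $k \ge 2$, this pushes $H_{B_1}(\bv_1)$ below the lower bound $H_{n+k}$ demanded by $\cV_{B_1}$. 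Balancing this size estimate against the definition of $\cW$ and the parameters in \eqref{E:R}--\eqref{E:epsilon} is the technical heart of the proof.
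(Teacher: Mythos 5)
Your argument for $k=1$ is correct and essentially the same as the paper's Case~(1). For $k=1$ the window $q_1/q_2\le R^8$ together with \eqref{E:R}, \eqref{E:epsilon} does force $|\bv_1\cdot\bw_2|\le 78\epsilon\kappa^2 R^{10}<1$, hence $\bv_1\cdot\bw_2=\bv_2\cdot\bw_1=0$; then both $\bw_j$ are orthogonal to $\mathrm{span}\{\bv_1,\bv_2\}$, and the dichotomy (coincident planes vs.\ parallel $\bv$'s) and its globalization by transitivity are exactly the paper's dichotomy between subcases (1.1) and (1.2).

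For $k\ge 2$, however, the crucial first step fails. Lemma~\ref{L:inequ} with $d_k=2$ and $q_1/q_2\le R^2$ gives
$$
|\bv_1\cdot\bw_2|\;\le\;6\epsilon R^2+72\epsilon\kappa^2\,\frac{q_1}{q_2}\,R^{k+1}\;\le\;6\epsilon R^2+72\epsilon\kappa^2 R^{k+3},
$$
and with $\epsilon\le10^{-2}\kappa^{-2}R^{-10}\rho_0$ the second term is $\le 0.72\,\rho_0\,R^{k-7}$, which is $<1$ only for small $k$ and is unbounded as $k\to\infty$. (The $R^{k+1}$ arises from $q_2^{-1}H_{B_2}(\bv_2)\cdot\rho(B)\lesssim H_{n+k+1}\cdot R^{-n}\rho_0/q_2$, a cross-scale term that $\epsilon$ cannot absorb for large $k$.) So you cannot conclude $\bv_1\cdot\bw_2=0$, and the dichotomy does not get off the ground for $k\ge 2$. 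Separately, even if the dichotomy held, your plan of \emph{ruling out} the parallel alternative would not suffice: case~(i) requires the planes to coincide also when $\bv_1\parallel\bv_2$, and your promotion of $\bw_2$ into $\cW(B_1,\bv_1)$ only yields $H_{B_1}(\bv_1)\lesssim(q_1/q_2)H_{B_2}(\bv_2)$, which \emph{increases} rather than decreases and gives no contradiction with $H_{B_1}(\bv_1)\ge H_{n+k}$.

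The paper's Case~(2) is built precisely to bypass these obstacles. It never tries to kill $\bv_1\cdot\bw_2$; instead it sets $\bv_0=\bw_1\times\bw_2$, exploits the triple-product identity $\bv_1\times\bv_0=(\bv_1\cdot\bw_2)\bw_1$ together with the refined size bounds $\max\{|a_j|,|b_j+z_{B_j}a_j|\}\le 2H_{n+k}^{\lambda/(1+\lambda)}R^{-2k-3}$ and $|b_j+za_j|\le 2H_{n+k}^{\mu/(1+\lambda)}R^{k+3}$ on $B$, to bound $q_0$, and in the subcase $q_0\ne 0$ shows $\Delta_\epsilon(\bv_1)\cap B\subset\Delta_\epsilon(\bv_0)$ and invokes Lemma~\ref{L:main1} to reach a contradiction. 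That argument works uniformly in $k$ and makes no assumption about whether the $\bv_j$ are parallel; it is the missing idea in your proposal.
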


\begin{proof}
We proceed by considering two separate cases.

\medskip

\textbf{Case (1).} Suppose $k=1$. We consider two subcases.

\medskip

\textbf{Case (1.1).} The linear span of the set
$$\cV:=\bigcup_{\substack{B'\in\sB_{n+1}\\B'\subset B}}\{\bv\in\cV_{B',1}:\Delta_\epsilon(\bv)\cap B\ne\emptyset\}$$
is of dimension $\ge2$. We prove that (i) holds. It suffices to prove that for $j=1,2$, if $B_j\in\sB_{n+1}$, $B_j\subset B$, $\bv_j\in\cV_{B_j,1}$, $\Delta_\epsilon(\bv_j)\cap B\ne\emptyset$, and $\bv_1$ and $\bv_2$  are linearly independent, then $L(B_1,\bv_1)=L(B_2,\bv_2)$. Let $\bv_j=(p_j,r_j,q_j)$, $\bw_j=\bw(B_j,\bv_j)$. It follows from Lemma \ref{L:inequ} that
$$|\bv_1\cdot\bw_2|\le 6\epsilon R^8+72\epsilon\kappa^2\frac{q_1}{q_2}R^2\le 78\epsilon\kappa^2R^{10}<1.$$
Since $\bv_1\cdot\bw_2$ is an integer, it must be $0$. Hence $\bw_2\, \bot\, \mathrm{span}\{\bv_1,\bv_2\}$. Similarly, $\bw_1\, \bot\, \mathrm{span}\{\bv_1,\bv_2\}$. Thus $\bw_1$ and $\bw_2$ are linearly dependent. This means that $L(B_1,\bv_1)=L(B_2,\bv_2)$.

\medskip

\textbf{Case (1.2).} The linear span of $\cV$ is of dimension $1$. We prove that (ii) holds. Let $\bv_0=(p_0,r_0,q_0)$ be an element in $\cV$ with the smallest possible $q_0$. Then any $\bv\in\cV$ is of the form $t\bv_0$ for some $t\ge1$. Moreover, since $\bv_0\in\cV_{B'}$ for some $B'\in\sB_{n+1}$, we have that $H_{n+1}^{\frac{1}{1+\lambda}}\le q_0\le 2H_{n+2}$. This completes the proof of Case (1).

\medskip

\textbf{Case (2).} Suppose $k\ge2$. We prove that (i) holds. It suffices to prove that if $B_j\in\sB_{n+k}$, $B_j\subset B$, $\bv_j\in\cV_{B_j,k}$, $\Delta_\epsilon(\bv_j)\cap B\ne\emptyset$, and $\bw_j=\bw(B_j,\bv_j)$  ($j=1,2$), then $\bw_1$ and $\bw_2$ are linearly dependent. Let $\bv_j=(p_j,r_j,q_j)$, $\bw_j=(a_j,b_j,c_j)$. Then

$$\max\{|a_j|,|b_j+z_{B_j}a_j|\}=q_j^{-1}H_{B_j}(\bv_j)\le(H_{n+k}^{\frac{1}{1+\lambda}}R^{2k+4})^{-1}\cdot 2H_{n+k+1}=2H_{n+k}^{\frac{\lambda}{1+\lambda}}R^{-2k-3}.$$
Moreover, for any $(x,y,z)\in B$, we have
\begin{align*}
|b_j+za_j|\le&\ |b_j+z_{B_j}a_j|+|a_j||z-z_{B_j}|\\
\le&\ q_j^\mu+\rho(B_j)^{\frac{1}{2}}+|a_j|\cdot2\rho(B)\\
\le&\ q_j^\mu+(R^{-(n+k)}\rho_0)^{\frac{1}{2}}+2H_{n+k}^{\frac{\lambda}{1+\lambda}}R^{-2k-3}\cdot 2R^{-n}\rho_0\\
=&\ q_j^\mu+(R^{-(n+k)}\rho_0)^{\frac{1}{2}}+
4(3\epsilon\kappa)^{\frac{\lambda}{1+\lambda}}\rho_0^{\frac{1}{1+\lambda}}R^{-(3+\frac{1}{1+\lambda}n+\frac{2+\lambda}{1+\lambda}k)}\\
\le&\ q_j^\mu+R^{-\frac{1}{2}}+R^{-1} \\
\le&\ q_j^\mu+1\le2q_j^\mu\le 2(H_{n+k}^{\frac{1}{1+\lambda}}R^{2k+6})^\mu\le2H_{n+k}^{\frac{\mu}{1+\lambda}}R^{k+3}.
\end{align*}
Furthermore, it follows from Lemma \ref{L:inequ} that
\begin{align*}
q_1^{-1}|\bv_1\cdot\bw_2|&\le6\epsilon q_1^{-1}R^2+72\epsilon\kappa^2q_2^{-1}R^{k+1}\\
&\le78\epsilon\kappa^2\cdot(H_{n+k}^{\frac{1}{1+\lambda}}R^{2k+4})^{-1}\cdot R^{k+1}\\
&=78\epsilon\kappa^2H_{n+k}^{-\frac{1}{1+\lambda}}R^{-k-3}.
\end{align*}
Let $$\bv_0=(p_0,r_0,q_0)=\bw_1\times \bw_2.$$
It suffices to prove that $\bv_0=0$. By the triple cross product expansion, we have
$$\bv_1\times \bv_0=\bv_1\times(\bw_1\times \bw_2)=(\bv_1\cdot\bw_2)\bw_1.$$
Comparing the first two components of the vectors on both sides, we obtain
\begin{align}
q_0\frac{r_1}{q_1}-r_0&=q_1^{-1}(\bv_1\cdot\bw_2)a_1, \label{E:c1}\\
q_0\frac{p_1}{q_1}-p_0&=-q_1^{-1}(\bv_1\cdot\bw_2)b_1. \label{E:c2}
\end{align}
Suppose to the contrary that $\bv_0\ne0$. There are two cases.

\medskip

\textbf{Case (2.1).} Suppose $q_0=0$. Then $(p_0,r_0)\ne(0,0)$. It then follows from \eqref{E:c1} and \eqref{E:c2} that
\begin{align*}
1&\le\max\{|r_0|,|p_0-z_{B_1}r_0|\}\\
&=q_1^{-1}|\bv_1\cdot\bw_2|\max\{|a_1|,|b_1+z_{B_1}a_1|\}\\
&\le q_1^{-1}|\bv_1\cdot\bw_2|\max\{|a_1|,|b_1+z_{B_1}a_1|\}^{1/\lambda}\\
&\le 78\epsilon\kappa^2H_{n+k}^{-\frac{1}{1+\lambda}}R^{-k-3}\cdot 4H_{n+k}^{\frac{1}{1+\lambda}}R^{-(2k+3)/\lambda}\\
&\le312\epsilon\kappa^2<1,
\end{align*}
a contradiction.

\medskip

\textbf{Case (2.2).} Suppose $q_0\ne0$. Without loss of generality, we may assume that $q_0>0$. Then
\begin{align}
q_0&=|a_1b_2-a_2b_1|=|a_1(b_2+za_2)-a_2(b_1+za_1)|\notag\\
&\le 2\cdot2H_{n+k}^{\frac{\lambda}{1+\lambda}}R^{-2k-3}\cdot2H_{n+k}^{\frac{\mu}{1+\lambda}}R^{k+3}=8H_{n+k}^{\frac{1}{1+\lambda}}R^{-k}.\label{E:q_0}
\end{align}
It follows that
$$q_0/q_1\le8H_{n+k}^{\frac{1}{1+\lambda}}R^{-k}\cdot H_{n+k}^{-\frac{1}{1+\lambda}}=8R^{-k}\le1/2.$$
We prove that $\Delta_\epsilon(\bv_1)\cap B\subset\Delta_\epsilon(\bv_0)$.
Let $(x,y,z)\in\Delta_\epsilon(\bv_1)\cap B$.
It follows from \eqref{E:c1} and \eqref{E:c2}  that
\begin{align*}
&\ q_0^{1+\lambda}\Big|x-\frac{p_0}{q_0}-z\Big(y-\frac{r_0}{q_0}\Big)\Big|\\
\le &\ q_0^{1+\lambda}\Big|x-\frac{p_1}{q_1}-z\Big(y-\frac{r_1}{q_1}\Big)\Big|
+q_0^{1+\lambda}\Big|\frac{p_1}{q_1}-\frac{p_0}{q_0}-z\Big(\frac{r_1}{q_1}-\frac{r_0}{q_0}\Big)\Big|\\
< &\ q_0^{1+\lambda}\frac{\epsilon}{q_1^{1+\lambda}}+q_0^\lambda q_1^{-1}|\bv_1\cdot\bw_2||b_1+za_1|\\
\le &\ \epsilon/2+8H_{n+k}^{\frac{\lambda}{1+\lambda}}R^{-\lambda k}\cdot78\epsilon\kappa^2H_{n+k}^{-\frac{1}{1+\lambda}}R^{-k-3}\cdot2H_{n+k}^{\frac{\mu}{1+\lambda}}R^{k+3}\\
= &\  \epsilon/2+1248\epsilon\kappa^2 R^{-\lambda k}\\
\le &\ \epsilon(1/2+1248\kappa^2 R^{-1/2})<\epsilon
\end{align*}
and
\begin{align*}
q_0^{1+\mu}\Big|y-\frac{r_0}{q_0}\Big|
\le &\ q_0^{1+\mu}\Big|y-\frac{r_1}{q_1}\Big|+q_0^{1+\mu}\Big|\frac{r_1}{q_1}-\frac{r_0}{q_0}\Big|\\
< &\ q_0^{1+\mu}\frac{\epsilon}{q_1^{1+\mu}}+q_0^\mu q_1^{-1}|\bv_1\cdot\bw_2||a_1|\\
\le &\ \epsilon/2+8H_{n+k}^{\frac{\mu}{1+\lambda}}R^{-\mu k}\cdot78\epsilon\kappa^2H_{n+k}^{-\frac{1}{1+\lambda}}R^{-k-3}\cdot2H_{n+k}^{\frac{\lambda}{1+\lambda}}R^{-2k-3}\\
= &\ \epsilon/2+1248\epsilon\kappa^2R^{-\mu k-3k-6}\\
\le &\ \epsilon(1/2+1248\kappa^2 R^{-1})<\epsilon.
\end{align*}
Thus $(x,y,z)\in\Delta_\epsilon(\bv_0)$. This proves $\Delta_\epsilon(\bv_1)\cap B\subset\Delta_\epsilon(\bv_0)$.
It then follows from $\Delta_\epsilon(\bv_1)\cap B\ne\emptyset$ that $\Delta_\epsilon(\bv_0)\cap B\ne\emptyset$. By Lemma \ref{L:main1}, we have
$q_0^{1+\lambda}>2H_{n+1}$, which contradicts \eqref{E:q_0}. This completes the proof of the lemma.
\end{proof}

We are now prepared to prove Lemma \ref{L:main2}.

\begin{proof}[Proof of Lemma \ref{L:main2}]
Let $n\ge0$, $B\in\sB'_n$, $k\ge 1$. We need to prove that there is a plane $L_k(B)$ such that for any $B'\in\sB_{n+k}$ with $B'\subset B$ and any $\bv\in\cV_{B',k}$, we have $\Delta_\epsilon(\bv)\cap B'\subset L_k(B)^{(R^{-(n+k)}\rho_0)}$. We need only to consider the case that $\Delta_\epsilon(\bv)\cap B'\ne\emptyset$.
By Lemma \ref{L:const}, one of the following statements holds:
\begin{itemize}
  \item[(i)] There is a plane $L_k(B)$ such that for any $B'\in\sB_{n+k}$ with $B'\subset B$, if $\bv\in\cV_{B',k}$ and $\Delta_\epsilon(\bv)\cap B\ne\emptyset$, then $L(B',\bv)=L_k(B)$.
  \item[(ii)] $k=1$, and there is $\bv_0=(p_0,r_0,q_0)\in\ZZ^2\times\NN$ with $H_{n+1}^{\frac{1}{1+\lambda}}\le q_0\le 2H_{n+2}$ such that for any $B'\in\sB_{n+1}$ with $B'\subset B$, if $\bv\in\cV_{B',1}$ and $\Delta_\epsilon(\bv)\cap B\ne\emptyset$, then $\bv=t\bv_0$ for some $t\ge1$.
\end{itemize}

Suppose (i) holds. It suffices to prove that $\Delta_\epsilon(\bv)\cap B'\subset L(B',\bv)^{(R^{-(n+k)}\rho_0)}$ for any $B'\in\sB_{n+k}$ and $\bv=(p,r,q)\in\cV_{B'}$.
Firstly, we have that
$$\rho(B')q\le R^{-(n+k)}\rho_0\cdot2H_{n+k+1}=6\epsilon \kappa R\le 1/4.$$
Denote $\bw(B',\bv)=(a,b,c)$. If $(x,y,z)\in\Delta_\epsilon(\bv)\cap B'$, then
\begin{align*}
  |ax+by+c|&=\Big|a\Big(x-\frac{p}{q}-z\Big(y-\frac{r}{q}\Big)\Big)+(b+za)\Big(y-\frac{r}{q}\Big)\Big|\\
  &< |a|\frac{\epsilon}{q^{1+\lambda}}+|b+za|\frac{\epsilon}{q^{1+\mu}}\\
  &\le \epsilon q^{-1}(|a|q^{-\lambda}+|b+z_{B'} a|q^{-\mu}+|z-z_{B'}||a|q^{-\mu})\\
  &\le \epsilon q^{-1}(2+\rho(B')^{\frac{1}{2}}q^{-\mu}+\rho(B')q^{\lambda-\mu})\\
  &\le \epsilon q^{-1}(2+(\rho(B')q)^{\frac{1}{2}}+\rho(B')q)\\
  &\le 3\epsilon q^{-1}.
\end{align*}
Note that
$$H_{B'}(\bv)=q\max\{|a|,|b+z_{B'}a|\}\le\kappa q\max\{|a|,|b|\}.$$
Thus
$$\frac{|ax+by+c|}{\sqrt{a^2+b^2}}<\frac{3\epsilon}{q\max\{|a|,|b|\}}\le\frac{3\epsilon \kappa }{H_{B'}(\bv)}\le\frac{3\epsilon \kappa }{H_{n+k}}=R^{-(n+k)}\rho_0.$$
This means that $(x,y,z)\in L(B',\bv)^{(R^{-(n+k)}\rho_0)}$. Thus $\Delta_\epsilon(\bv)\cap B'\subset L(B',\bv)^{(R^{-(n+k)}\rho_0)}$.

\medskip

Suppose (ii) holds. Consider the plane
$$L_1(B)=\Big\{(x,y,z)\in\RR^3:x-\frac{p_0}{q_0}-z_B\Big(y-\frac{r_0}{q_0}\Big)=0\Big\}.$$
We proceed by showing that $\Delta_\epsilon(\bv)\cap B\subset L_1(B)^{(R^{-(n+1)}\rho_0)}$ for any $\bv=t\bv_0$ with $t\ge1$. Let $\bv=(p,r,q)$. Then $q\ge q_0$, $p/q=p_0/q_0$, and $r/q=r_0/q_0$. If $(x,y,z)\in\Delta_\epsilon(\bv)\cap B$, then
\begin{align*}
&\ (1+z_B^2)^{-1/2}\Big|x-\frac{p_0}{q_0}-z_B\Big(y-\frac{r_0}{q_0}\Big)\Big|\\
\le&\ \Big|x-\frac{p}{q}-z\Big(y-\frac{r}{q}\Big)\Big|+|z-z_B|\Big|y-\frac{r}{q}\Big|\\
<&\ \frac{\epsilon}{q^{1+\lambda}}+\rho(B)\frac{\epsilon}{q^{1+\mu}}\\
\le&\ \frac{\epsilon}{q_0^{1+\lambda}}+\rho(B)\frac{\epsilon}{q_0^{1+\mu}}\\
\le&\ \frac{\epsilon}{q_0^{1+\lambda}}(1+\rho(B)q_0)\\
\le&\ \frac{\epsilon}{H_{n+1}}(1+R^{-n}\rho_0\cdot 2H_{n+2})\\
=&\ (3\kappa)^{-1}R^{-(n+1)}\rho_0(1+6\kappa\epsilon R^2)\\
<&\ R^{-(n+1)}\rho_0.
\end{align*}
This means that $(x,y,z)\in L_1(B)^{(R^{-(n+1)}\rho_0)}$. Thus $\Delta_\epsilon(\bv)\cap B\subset L_1(B)^{(R^{-(n+1)}\rho_0)}$.
\end{proof}

\begin{remark}
We say that a plane $L\subset\RR^3$ is \textsl{vertical} if it is of the form
$$L=\{(x,y,z)\in\RR^3:ax+by+c=0\},$$
where $a,b,c\in\RR$ and $(a,b)\ne(0,0)$. The above proof in fact shows that the planes $L_k(B)$ given in Lemma \ref{L:main2} are vertical. In turn, in view of \cite[Theorem C.8]{FSU} (the case of $Z=\RR^3$ and $\mathcal{H}=\{\text{vertical planes}\}$), the proof of Theorem \ref{T:HAW_U} actually shows that Alice can win the hyperplane absolute game on $\RR^3$ with target set $S(\lambda,\mu)$ by choosing neighborhoods of vertical planes.
Furthermore, for any fixed $z\in\RR$ let us  denote
$$S(z;\lambda,\mu):=\{(x,y)\in\RR^2:(x,y,z)\in S(\lambda,\mu)\},$$
that is, the set of those $(x,y)$ for which there exists $c>0$ such that
$$
\max\left\{q^{1+\lambda}\Big|x-\frac{p}{q}-z\Big(y-\frac{r}{q}\Big)\Big|,q^{1+\mu}\Big|y-\frac{r}{q}\Big|\right\}\ge c, \qquad \forall \ (p,r,q)\in\ZZ^2\times\NN.
$$
It is easy to see that the vertical HAW property of $S(\lambda,\mu)$ yields the following: \textsl{For every $z\in\RR$, the set $S(z;\lambda,\mu)$ is HAW on $\RR^2$.} Note that $S(0;\lambda,\mu)$ coincides with $\Bad(\lambda,\mu)$; more generally, it is easy to see that if $z\in\QQ$, then $(x,y)\in S(z;\lambda,\mu)$ if and only if $(x-zy,y)\in\Bad(\lambda,\mu)$. However, if $z$ is irrational, we do not know any relation between $S(z;\lambda,\mu)$ and $\Bad(\lambda,\mu)$.
\end{remark}

\section{Proofs of Theorems \ref{T:HAW} and \ref{T:HAW-expand}}\label{S:6}

We are now in position to prove Theorems \ref{T:HAW} and \ref{T:HAW-expand}.

\begin{proof}[Proof of Theorem \ref{T:HAW}]
We first prove the theorem for $F^+=F^+_{\lambda,\mu}$ as in \equ{sldgtweights}, where $\lambda,\mu$ are as in \equ{condr} with $\lambda \ge \mu$.
In view of Lemma \ref{L:HAW-mnfd} (iv), it suffices to prove that for any $\Lambda\in X=G/\Gamma$, there is an open neighborhood $\Omega$ of $\Lambda$ in $X$ such that $E(F^+_{\lambda,\mu})\cap\Omega$ is HAW on $\Omega$.  Let $U$ be the group as in  \equ{defu}, and let $B$ be the group of lower triangular matrices in $G$. By the Bruhat decomposition, the set $BU$ is Zariski open in $G$, and the multiplication map $B\times U\to BU$ is a diffeomorphism.

We claim that there exist $b_0\in B$ and $u_0\in U$ such that $b_0u_0\Gamma=\Lambda$. In fact, by the Borel density theorem, the set $\pi^{-1}(\Lambda)$ is Zariski dense in $G$, where $\pi:G\to X$ is the natural projection. It follows that $\pi^{-1}(\Lambda)\cap BU\ne\emptyset$. Thus we can choose $b_0\in B$ and $u_0\in U$ such that $b_0u_0\in\pi^{-1}(\Lambda)$, that is, $b_0u_0\Gamma=\Lambda$. This verifies the claim.

Let $\Omega_B$ and $\Omega_U$ be open neighborhoods of $b_0$ and $u_0$ in $B$ and $U$, respectively, such that the map $\varphi:\Omega_B\times\Omega_U\to X$, $\varphi(b,u)=bu\Gamma$, is a diffeomorphism onto an open set $\Omega$ in $X$. It follows that $\Omega$ is an open neighborhood of $\Lambda$. Thus, in view of Lemma \ref{L:HAW-mnfd} (iii), it suffices to prove that the set
\begin{equation}\label{E:set}
\varphi^{-1}(E(F^+_{\lambda,\mu})\cap\Omega)=\left\{(b,u)\in\Omega_B\times\Omega_U:bu\Gamma\in E(F^+_{\lambda,\mu})\right\}
\end{equation}
is HAW on $\Omega_B\times\Omega_U$. Note that for $b\in B$, the subset $\{g_tbg_t^{-1}:t\ge0\}$ of $G$ is bounded. It follows that $bu\Gamma\in E(F^+_{\lambda,\mu})$ if and only if $u\Gamma\in E(F^+_{\lambda,\mu})$. This implies that the set \eqref{E:set} is equal to
\begin{equation}\label{E:set2}
\Omega_B\times\left\{u\in\Omega_U:u\Gamma\in E(F^+_{\lambda,\mu})\right\}.
\end{equation}
By Theorem \ref{T:HAW_U} and Lemma \ref{L:HAW-mnfd} (v), the set \eqref{E:set2} is HAW on $\Omega_B\times\Omega_U$. This proves that $E(F^+_{\lambda,\mu})$ is HAW on $X$.

\medskip

We now consider the case when $F^+$ is $\RR$-diagonalizable. In this case, it is easy to see that either $F^+$ is trivial (in which case the conclusion obviously holds), or there is an automorphism $\sigma$ of $G$ such that $F^+=\sigma(F^+_{\lambda,\mu})$ for some $\lambda,\mu$ as in \equ{condr} with $\lambda \ge \mu$. The automorphism $\sigma$ is of the form $\sigma(g)=g_0\tau(g)g_0^{-1}$, where $g_0\in G$ and $\tau(g)=g$ or $(g^T)^{-1}$. Note that $\tau$ preserves $\Gamma$, hence induces a diffeomorphism $\bar{\tau}$ of $X$ by $\bar{\tau}(g\Gamma)=\tau(g)\Gamma$. We claim that
\begin{equation}\label{E:set3}
E(F^+)=g_0\bar{\tau}\big(E(F^+_{\lambda,\mu})\big).
\end{equation}
In fact, for $\Lambda\in X$, we have
\begin{align*}
\Lambda\in E(F^+) \Longleftrightarrow & \ g_0^{-1}F^+\Lambda=\tau(F^+_{\lambda,\mu})g_0^{-1}\Lambda \text{ is bounded}\\
\Longleftrightarrow & \ \bar{\tau}\big(\tau(F^+_{\lambda,\mu})g_0^{-1}\Lambda\big)=F^+_{\lambda,\mu}\bar{\tau}(g_0^{-1}\Lambda) \text{ is bounded}\\
\Longleftrightarrow & \ \bar{\tau}(g_0^{-1}\Lambda)\in E(F^+_{\lambda,\mu})\\
\Longleftrightarrow & \ \Lambda\in g_0\bar{\tau}\big(E(F^+_{\lambda,\mu})\big).
\end{align*}
This verifies \eqref{E:set3}. The HAW property of $E(F^+)$ now follows from that of $E(F^+_{\lambda,\mu})$ and Lemma \ref{L:HAW-mnfd} (iii).

\medskip

Finally, we consider the general case when $F^+$ is only assumed to be diagonalizable. By the real Jordan decomposition, there are one-parameter subgroups $F_i=\{g_t^{(i)}:t\in\RR\}$ $(i=1,2)$ such that $F_1$ is $\RR$-diagonalizable, $F_2$ has compact closure, and $g_t=g_t^{(1)}g_t^{(2)}$ with $g_t^{(1)}$ commuting with $g_t^{(2)}$. It is easy to see that $E(F^+)=E(F^+_1)$. Thus the HAW property of $E(F^+)$ follows from that of $E(F^+_1)$. This completes the proof.
\end{proof}

Next, we prove Theorem \ref{T:HAW-expand}.

\begin{proof}[Proof of Theorem \ref{T:HAW-expand}]
We first consider the case when $F^+=F^+_{\lambda,\mu}$ with $\lambda\ge\mu$. There are two sub-cases.

\medskip

\textbf{Case (1).} Assume $\lambda>\mu$. Then $H$ is equal to the group $U$ as in \equ{defu}.  We need to prove that for any $\Lambda\in X$, the set of $u\in U$ such that $u\Lambda\in E(F^+_{\lambda,\mu})$ is HAW on $U$.
The special case of $\Lambda=\Gamma$ reduces to Theorem \ref{T:HAW_U}. We prove the general case by using this special case. In view of Lemma \ref{L:HAW-mnfd} (iv), it suffices to prove that for any $u_0\in U$, there is an open neighborhood $\Omega$ of $u_0$ in $U$ such that the set
\begin{equation}\label{E:HAW-Omega}
\{u\in\Omega:u\Lambda\in E(F^+_{\lambda,\mu})\}
\end{equation}
is HAW on $\Omega$. Similar to the proof of Theorem \ref{T:HAW}, the Bruhat decomposition and the Borel density theorem imply that $\pi^{-1}(\Lambda)\cap u_0^{-1}BU\ne\emptyset$, where $\pi:G\to G/\Gamma$ is the projection and $B$ is the group of lower triangular matrices in $G$.
Choose $g_0\in\pi^{-1}(\Lambda)\cap u_0^{-1}BU$. Then $\Lambda=g_0\Gamma$ and $u_0g_0\in BU$. Let $\Omega$ be an open neighborhood of $u_0$ in $U$ such that $\Omega g_0\subset BU$. Then there are smooth maps $\varphi:\Omega\to B$ and $\psi:\Omega\to U$ such that
\begin{equation}\label{E:Brubat}
ug_0=\varphi(u)\psi(u), \qquad \forall \ u\in\Omega.
\end{equation}
We verify that the tangent map $(d\psi)_{u_0}$ is a linear isomorphism. In fact, if we let $b_0=\varphi(u_0)$ and $u'_0=\psi(u_0)$,
then it follows from \eqref{E:Brubat} that
$$dr_{g_0}(Y)=dr_{u'_0}\circ(d\varphi)_{u_0}(Y)+dl_{b_0}\circ(d\psi)_{u_0}(Y), \qquad \forall \ Y\in T_{u_0}U,$$
where for $g\in G$, $r_g$ and $l_g$ denote the corresponding right and left translations on $G$, respectively.
Note that $$dr_{g_0}(Y)\in T_{u_0g_0}(Uu_0g_0), \qquad dr_{u'_0}\circ(d\varphi)_{u_0}(Y)\in T_{u_0g_0}(Bu_0g_0).$$
Thus if $(d\psi)_{u_0}(Y)=0$, then $$dr_{g_0}(Y)\in T_{u_0g_0}(Uu_0g_0)\cap T_{u_0g_0}(Bu_0g_0)=0,$$ and hence $Y=0$. This shows that $(d\psi)_{u_0}$ is an isomorphism.
In view of the inverse function theorem, by shrinking $\Omega$ if necessary, we may assume that $\psi$ is a diffeomorphism from $\Omega$ onto an open subset $\psi(\Omega)$ of $U$. Note that for $u\in\Omega$,
$$F^+_{\lambda,\mu}u\Lambda=F^+_{\lambda,\mu}ug_0\Gamma=F^+_{\lambda,\mu}\varphi(u)\psi(u)\Gamma$$ is bounded if and only if $F^+_{\lambda,\mu}\psi(u)\Gamma$ is bounded. Thus the image under $\psi$ of the set \eqref{E:HAW-Omega} is equal to
$$\{u'\in\psi(\Omega):u'\Gamma\in E(F^+_{\lambda,\mu})\},$$
which, by Theorem \ref{T:HAW_U}, is HAW on $\psi(\Omega)$. Hence, it follows from Lemma \ref{L:HAW-mnfd} (iii) that the set \eqref{E:HAW-Omega} is HAW on $\Omega$.

\medskip

\textbf{Case (2).} Assume $\lambda=\mu=1/2$. Then $H$ is equal to the group $U_0$ defined in \equ{defu0}. Let $\Lambda\in X$. We need to prove that the set of $u\in U_0$ such that $u\Lambda\in E(F^+_{\frac{1}{2},\frac{1}{2}})$ is HAW on $U_0$. The case of $\Lambda=\Gamma$ follows immediately from the HAW property of $\Bad$ proved in \cite{BFKRW}. For the general case, it suffices to prove that for any $u_0\in U_0$, there is an open neighborhood $\Omega$ of $u_0$ in $U_0$ such that the set
\begin{equation}\label{E:HAW-Omega2}
\{u\in\Omega:u\Lambda\in E(F^+_{\frac{1}{2},\frac{1}{2}})\}
\end{equation}
is HAW on $\Omega$. This can be done by modifying the proof of Case (1) as follows.
Choose $g_0\in G$ such that $\Lambda=g_0\Gamma$ and $u_0g_0\in BU$, and let $\Omega$ be an open neighborhood of $u_0$ in $U_0$ such that $\Omega g_0\subset BU$. Consider the group $$P:=\left\{\begin{pmatrix}
  *&*&0\\ *&*&0\\ *&*&*
\end{pmatrix}\in G\right\}.$$ There are smooth maps $\varphi:\Omega\to P$ and $\psi:\Omega\to U_0$ such that
$ug_0=\varphi(u)\psi(u)$ for any $u\in\Omega$.
(We can first decompose $ug_0$ into a product of elements in $B$ and $U$, and then decompose the $U$-component into a product of elements in $P$ and $U_0$.)
Similar to Case (1), we can show that, by shrinking $\Omega$ if necessary, $\psi$ is a diffeomorphism from $\Omega$ onto an open subset $\psi(\Omega)$ of $U_0$. In turn, the HAW property on $\psi(\Omega)$ of the set
$$\{u'\in\psi(\Omega):u'\Gamma\in E(F^+_{\frac{1}{2},\frac{1}{2}})\}$$ implies that the set \eqref{E:HAW-Omega2}
is HAW on $\Omega$.

\medskip

This completes the proof of Theorem \ref{T:HAW-expand} for the case $F^+=F^+_{\lambda,\mu}$ with $\lambda\ge\mu$.
The proofs of the more general cases are similar to the corresponding parts in the proof of Theorem~\ref{T:HAW}.
\end{proof}

\section{Generalizations}\label{S:7}

Let $G$ be a Lie group,  $\Gamma \subset G$ a non-uniform lattice, and let $F^+ $ be a one-parameter subsemigroup  of $G$.
Clearly the set $E(F^+)$ has zero Haar measure whenever the $F^+$-action on $\ggm$ is ergodic.
As mentioned in the introduction, Margulis \cite{Ma} conjectured   $E(F^+)$ to be thick whenever $F^+$ is non-quasiunipotent.
In \cite{KM} a necessary and sufficient condition for the thickness of  $E(F^+)$ was found -- so-called condition (Q); in particular, it is satisfied whenever
$F^+$ is $\Ad$-diagonalizable.

The aforementioned conjecture had been motivated by earlier results on the subject \cite{Da1, Da2}.
The latter were based on the method of Schmidt games.  However the argument in \cite{KM} did  not rely on games,
and  it was not possible to derive from it any information on the intersection of sets $E(F^+)$ for different $F^+$.
Later another proof of the thickness of sets $E(F^+)$  was found \cite{KW1}, based on the notion of modified Schmidt games.
Yet it also fails to be strong enough to guarantee that  in general,
$E(F^+_1)\cap E(F_2^+) \ne \emptyset$ for two different
$\Ad$-diagonalizable 
semigroups $F_1^+, F_2^+ \subset G$.

Due to the approach pioneered in \cite{BPV} and extended in \cite{An1, An2, ABV, NS} and the present paper, we now know much more for $G$ and $\Gamma$ as in  \equ{sld}. Another very recent development comes from the paper \cite{Be} by Beresnevich.
Namely,
let us take \eq{sldgeneral}{G=\SL_{d+1}(\RR),\ \Gamma=\SL_{d+1}(\ZZ),\ X = \ggm\,,} and denote by $S_d$ the $(d-1)$-dimensional simplex
$$S_d = \left\{\br = (r_1,\dots,r_d): r_i \ge 0,\ \  \sum_{i=1}^d r_i = 1\right\}\,.$$
For $\br\in S_d$,  consider
 \eq{sldplusgt}{F_{\br}^+ = \{g_t: t \ge 0\}\text{ where }g_t = \diag(e^{r_1 t},\dots,e^{r_d t} ,e^{-t} )\in G\,.}
Also for $\x\in\RR^d$ denote $u_{\x} =\begin{pmatrix}
  I_d & \x\\  &1
\end{pmatrix}\in G$.
Then it is shown in \cite{Kl} that $ u_{\x}\Gamma \in E( F^+_{\br} )$ if and only if $\x$ belongs to the set $$\Bad(\rr) :=\left\{\x\in\RR^d: \ \ 
\inf_{(p_1,\dots,p_d) \in \mathbb{Z}^{d},\ q\in
\mathbb{N}}\ \  \max_i\{ q^{r_i}|qx_i-p_i|\}> 0\right\}\,.$$
This allows one to dynamically restate  \cite[Theorem 1]{Be} (not in its strongest form): {\it if $\mathcal{R}$ is a countable subset of $S_d$ with $\dist(\mathcal{R},\partial S_d) > 0$, then the set
$$
\bigcap_{\br\in\mathcal{R}}\left\{\x\in\RR^d: u_{\x}\Gamma \in E( F^+_{\br} )\right\}
$$
is thick. }

Note that the argument  in \cite{Be} does not involve Schmidt games, and it is not clear how to utilize it to conclude that the set
$\bigcap_{\br\in\mathcal{R}}\ E( F^+_{\br} )$ is thick in $X$. Still we would like to propose the following 

\begin{conj}\label{C1} Let $G$ be a 
Lie group, $\Gamma$ 
a lattice in $G$, $X = \ggm$,  and let $F^+$ be a one-parameter $\Ad$-diagonalizable 
subsemigroup of $G$. Then   $E(F^+)$ is HAW on $X$. Consequently, the conclusion of Theorem \ref{T:dim} holds in such generality.
\end{conj}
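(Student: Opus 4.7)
The natural strategy is to emulate the architecture of the proofs of Theorems \ref{T:HAW} and \ref{T:HAW-expand}, upgrading each step to the general Lie group setting. As a first reduction, the real Jordan decomposition argument in the last paragraph of the proof of Theorem \ref{T:HAW} lets one replace $F^+$ by its $\RR$-diagonalizable part without changing $E(F^+)$; passing to $G/\mathrm{Rad}(G)$ further reduces to the case of a semisimple $G$ without compact factors. A second reduction would be to HAW on the expanding horospherical subgroup $H := H(F^+)$. Let $C := C_G(F^+)$ be the centralizer and $H^-$ the contracting horospherical subgroup; the generalized Bruhat decomposition yields a Zariski open set $H^- C H \subset G$ on which multiplication is a diffeomorphism. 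Since $CH^-$ preserves boundedness of $F^+$-orbits ($C$ commutes with $F^+$, and $g_t H^- g_t^{-1}$ is relatively compact as $t \to +\infty$), and since by Borel density one may translate any $\Lambda \in X$ into the big cell, the argument of \S\ref{S:6} combined with Lemma \ref{L:HAW-mnfd}(iii)--(v) reduces the claim to: for every $\Lambda \in X$, the set $\{h \in H : h\Lambda \in E(F^+)\}$ is HAW on $H$. Via the exponential diffeomorphism of the nilpotent Lie algebra $\mathfrak{h}$ of $H$, this becomes an HAW statement on a Euclidean space.

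Third, one would play the hyperplane potential game along the lines of \S\S\ref{S:3}--\ref{S:pf_main}. Mahler's compactness criterion, applied to a finite collection of $G$-representations whose highest-weight orbits detect the cusps of $X$, reformulates boundedness of $F^+ h\Lambda$ as a Diophantine statement: for some $\epsilon > 0$, no relevant $\Gamma$-orbit vector $\bv$ satisfies $\inf_{t \ge 0}\|g_t h \bv\| < \epsilon$. For each such $\bv$, the associated bad region $\Delta_\epsilon(\bv) \subset H$ should, at the scale on which it is active, be contained in a neighborhood of an affine hyperplane in $\mathfrak{h}$ determined by the weight decomposition of the representation under $F^+$. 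One would then set up scales $H_n = cR^n$ and families $\sB_n$, $\sB'_n$, $\cV_{B,k}$ exactly as in \S\ref{S:3}, have Alice remove at each $B \in \sB'_n$ the dominant hyperplane of each scale class, and verify (a) an analog of Lemma \ref{L:main1} showing small-height $\bv$ are already killed by prior moves, and (b) an analog of Lemma \ref{L:main2} showing that a single affine hyperplane traps every tube of a given scale class meeting $B$.

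The main obstacle is the generalization of Lemma \ref{L:main2}, whose proof is genuinely three-dimensional: the cross product $\bw_1 \times \bw_2$ produces a third integral vector, and the case analysis exploits the precise exponents $\lambda, \mu$ together with the Heisenberg structure of $U$. For arbitrary $\Ad$-diagonalizable $F^+$ one needs a multilinear replacement -- most plausibly a geometry-of-numbers argument inside an appropriate exterior power of the representation, or on the Grassmannian of $\mathfrak{h}$ -- asserting that if many integer vectors $\bv$ of comparable height give tubes $\Delta_\epsilon(\bv)\cap B'$ all meeting a common ball $B$, then either all these tubes lie in a single affine hyperplane of $\mathfrak{h}$, or the $\bv$ are rational multiples of a single primitive vector whose tube is itself exceptionally flat. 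Formulating and proving such a dichotomy in the generality of an arbitrary $\Ad$-diagonalizable semigroup, where weight multiplicities, the non-commutativity of $H$, and the geometry of $G$-orbits of cusp vectors can all be complicated, will almost certainly require genuinely new ideas beyond the $\SL_3(\RR)$ techniques and constitutes the heart of the conjecture.
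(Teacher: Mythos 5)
This statement is Conjecture~\ref{C1}, not a theorem: the paper explicitly leaves it open and proves it only in special cases (real rank~$1$ via Dani, $\SL_3(\RR)/\SL_3(\ZZ)$ via Theorems~\ref{T:HAW} and~\ref{T:HAW-expand}, unweighted $\SL_{m+n}$ via Schmidt and Broderick--Fishman--Simmons, and a restricted weighted case via Guan--Yu). So there is no ``paper's own proof'' against which to check you.

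That said, your sketch tracks the paper's own stated reduction strategy closely and correctly. The paper proposes exactly this two-step plan: reduce Conjecture~\ref{C1} to Conjecture~\ref{C2} (the HAW statement on the expanding horospherical subgroup $H(F^+)$) by the same device as \S\ref{S:6} --- real Jordan decomposition to pass to the $\RR$-diagonalizable part, then a Bruhat-type big-cell decomposition $P H$ (in your notation $H^- C H$) together with Borel density to localize the problem on $H$ --- and then prove the HAW property on $H$ directly. Your identification of the centralizer $C = C_G(F^+)$ and contracting horosphere $H^-$ as the pieces that preserve boundedness of forward $F^+$-orbits is right and is exactly what replaces the lower-triangular group $B$ of \S\ref{S:6}. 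Your further reduction to a Diophantine statement via Mahler's criterion applied to $\Gamma$-orbits in representations detecting the cusps is also the standard and intended route.

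The genuine gap is the one you yourself flag, and it is worth being precise about why it is hard. The proof of Lemma~\ref{L:main2} is where all the arithmetic lives: it uses the cross product $\bw_1 \times \bw_2$ to manufacture a \emph{third} integer vector whose tube $\Delta_\epsilon(\bv_0)$ traps $\Delta_\epsilon(\bv_1)\cap B$, then invokes Lemma~\ref{L:main1} to rule out that vector. This is a three-dimensional, rank-two phenomenon: the cross product is a low-dimensional accident of $\wedge^2 \RR^3 \cong \RR^3$, the case split $q_0 = 0$ versus $q_0 \ne 0$ exploits the specific two-block weight structure of $\diag(e^{\lambda t}, e^{\mu t}, e^{-t})$, and the numerical estimates depend delicately on the exponents $\lambda, \mu$ and the Heisenberg coordinates of $U$. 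For a general $\Ad$-diagonalizable $F^+$ in a general semisimple $G$, the weight decomposition of a cusp-detecting representation can have many weight spaces of varying multiplicity, $H$ need not be two-step nilpotent, and there is no canonical multilinear map playing the role of the cross product. Your proposed dichotomy --- ``either all tubes lie in one affine hyperplane of $\mathfrak{h}$, or the $\bv$ are commensurable'' --- is the right target statement, but no mechanism to prove it is supplied; one would need a geometry-of-numbers input in the exterior algebra of the representation that controls how integer vectors of comparable height with intersecting tubes can be distributed, uniformly over the whole weight spectrum. Absent that, this is a research program rather than a proof, as you acknowledge. Your honest flag of this as the heart of the conjecture is correct.

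One small technical caution on the first reduction: quotienting by $\mathrm{Rad}(G)$ to reduce to semisimple $G$ needs care, since $E(F^+)$ is defined via boundedness in $G/\Gamma$ and the fibers of $G/\Gamma \to (G/\mathrm{Rad}(G))/\bar\Gamma$ may be noncompact for general lattices; one would need a separate argument (e.g.\ via Auslander's theorem and the structure of lattices in solvable groups) to justify that boundedness descends, rather than simply asserting it.
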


Similarly to what is done in \S\ref{S:6}, it would be possible to derive Conjecture \ref{C1} from the following conjectural generalization of Theorem~\ref{T:HAW_U}:

\begin{conj}\label{C2} Let $G$, $\Gamma$, $X$ and   $F^+$ be as in Conjecture \ref{C1}, and let $H = H(F^+)$ be as in \equ{defehs}. Then the set  \eq{efplush}{\{h \in H: h\Gamma \in E(F^+)\}} is HAW on $H$.
\end{conj}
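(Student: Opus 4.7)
The strategy is to follow the $\SL_3$ blueprint of \S\S3--6, replacing the ambient Euclidean coordinates on $U$ by exponential coordinates on the nilpotent Lie algebra $\mathfrak{h}$ of $H$. By Lemma \ref{L:HAW-mnfd} (iv) and a Bruhat/Borel density argument as at the start of the proof of Theorem \ref{T:HAW-expand}, it suffices to fix $h_0 \in H$ and prove that the set \equ{efplush} is HAW on a small neighborhood of $h_0$; pulling back via the diffeomorphism $\exp\colon \mathfrak{h} \to H$ reduces the question to showing that a target set $S \subset \mathfrak{h} \cong \RR^d$ (where $d = \dim H$) is HAW. By Lemma \ref{L:HPW} it suffices to win the hyperplane potential game on $S$, and by Remark \ref{R:to0} we may assume Bob's radii $\rho_i$ tend to $0$.

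The Diophantine description of $S$ comes from Mahler's compactness criterion: embedding $G$ into some $\SL_N(\RR)$ so that $\Gamma$ maps into a commensurable sublattice of $\SL_N(\ZZ)$ (and in the general lattice case working intrinsically via reduction theory), one finds that $F^+\exp(X)\Gamma$ is bounded iff there is $\epsilon > 0$ such that $\|g_t\cdot\Ad\exp(X)\cdot\bv\| \ge \epsilon$ for every nontrivial integer vector $\bv$ and every $t \ge 0$. Decomposing $\mathfrak{g}$ into weight spaces under $\Ad g_t$ produces, for each $\bv$, an excluded region $\Delta_\epsilon(\bv) \subset \mathfrak{h}$ in direct analogy with \S3, and $S = \bigcup_{\epsilon > 0}\bigl(\mathfrak{h} \setminus \bigcup_\bv \Delta_\epsilon(\bv)\bigr)$. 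One then copies the scale-$R$ ball hierarchy $\sB_n$ and the good-ball subcollection $\sB'_n$ of \S3, replacing $\bw(B,\bv)$ by a Minkowski companion obtained by applying the linear-forms theorem to a convex body in $\mathfrak{g}$ adapted to the weight decomposition of $g_t$. Alice's strategy for the $(\beta,\gamma)$-hyperplane potential game will be, exactly as in the proof of Theorem \ref{T:HAW_U}, to play at each scale-entry step the countable family of hyperplane neighborhoods $L_k(B_i)^{(2R^{-(n+k)}\rho_0)}$, $k\ge 1$, where $L_k(B_i) \subset \mathfrak{h}$ is the affine hyperplane furnished by the generalizations of Lemmas \ref{L:main1} and \ref{L:main2}.

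The principal obstacle is the analog of Lemma \ref{L:const}. In the $\SL_3$ setting the cross product on $\RR^3$ makes the dichotomy "the companions $\bw(B',\bv)$ span a line" versus "they span a plane, and their cross product is a low-height integer witness $\bv_0$ contradicting Lemma \ref{L:main1} at a coarser scale" into a single clean computation. For general $d$ the cross product must be replaced by an inductive successive-minima argument on exterior products $\bw_1 \wedge \cdots \wedge \bw_j$, showing that either all $\bw(B',\bv)$ lie in a common hyperplane of $\mathfrak{g}^*$ (giving the desired $L_k(B)$), or some nontrivial exterior product descends to an integer vector whose height violates the inductive bound of Lemma \ref{L:main1} at a coarser scale. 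Making this quantitative appears to require a sharper transference principle than the one-step application of Minkowski's theorem in Lemma \ref{L:BPV}, adapted to the possibly multi-weight decomposition of $\Ad g_t$ on $\mathfrak{h}$: in the Heisenberg case of \S5 there are only two positive weights $\lambda$ and $\mu$, each of multiplicity one, and the arithmetic in the proofs of Lemmas \ref{L:inequ} and \ref{L:const} depends on this in an essential way. Executing the general case should be feasible using the effective geometry of numbers of Kleinbock--Margulis together with Schmidt-style bounds for successive minima on adelic convex bodies, but this seems to us to be the principal technical obstruction separating Conjecture \ref{C2} from Theorem \ref{T:HAW_U}.
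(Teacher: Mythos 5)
The statement you are addressing is stated in the paper as Conjecture~\ref{C2}, not as a theorem; the authors explicitly leave it open, remarking only that it is known in real rank~$1$ (Dani~\cite{Da2}), for $\SL_3$ (the present paper), and for the unweighted $m\times n$ linear-forms flow on $\SL_{m+n}$ (Schmidt~\cite{Sc2}, Broderick--Fishman--Simmons~\cite{BFS}). So there is no paper proof to compare against; your submission is, and can only be, a research program rather than a proof.

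That said, your program is a faithful reconstruction of the framework the paper actually uses, and you correctly identify where it breaks down. The reduction steps are sound and mirror \S6: localize via Lemma~\ref{L:HAW-mnfd}~(iv) and Bruhat/Borel density, pull back through $\exp:\mathfrak{h}\to H$, pass to the hyperplane potential game by Lemma~\ref{L:HPW}, and describe $E(F^+)$ Diophantinely via Mahler's criterion (or, for a general $\Gamma$, via reduction theory). The scale hierarchy $\sB_n$, the good-ball family $\sB'_n$, and the companion-vector mechanism are all transposable in form. The genuine gap is exactly the one you name: the proof of Lemma~\ref{L:const} (the statement that all companion vectors at a given scale lie in a single hyperplane, or else produce a low-height integer witness contradicting Lemma~\ref{L:main1}) relies in Case~(2) on the cross product in $\RR^3$ to manufacture the integer vector $\bv_0=\bw_1\times\bw_2$ and to run the triple-product identity $\bv_1\times(\bw_1\times\bw_2)=(\bv_1\cdot\bw_2)\bw_1$. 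This is a dimension-$3$ accident; for $d>3$ one would need a successive-minima/exterior-algebra argument together with a transference principle matched to the full weight decomposition of $\Ad g_t$, and no such argument is given. Your Lemma~\ref{L:inequ} analogue is also not established: its proof in the paper exploits that there are exactly two positive weights $\lambda>\mu>0$ each of multiplicity one, so the bounds on $q^\lambda,q^\mu$ and the coordinate-by-coordinate cancellations are special to the Heisenberg group $U$. A second, smaller gap you gloss over is that Mahler's criterion as used in Lemma~\ref{L:diophantine} is specific to $\SL_n(\ZZ)$; for a general non-uniform lattice one needs a reduction-theory substitute (as in the Kleinbock--Margulis machinery), and the resulting $\Delta_\epsilon(\bv)$-regions need not have the clean product structure in the weight coordinates that the $\SL_3$ computation exploits. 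In short: your reduction is correct and matches the paper's architecture, but the core lemmas (\ref{L:inequ}, \ref{L:const}, and hence \ref{L:main2}) are not proved in the generality required, which is precisely why the paper's authors state this as a conjecture.
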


If $G$ has real rank $1$, the validity of the above conjectures can be extracted from the work of Dani \cite{Da2}.
The main results of the present paper  (Theorems  \ref{T:HAW} and  \ref{T:HAW-expand}) take care of the case \equ{sld}.
Note also that
 the conclusion of Conjecture \ref{C2}, and hence of Conjecture \ref{C1}, holds for
$$G = \SL_{m +n }( \RR), \ \Gamma = \SL_{m +n }( \ZZ), \ X = \ggm$$
and
$${g_t = \diag(e^{t/m }, \ldots,
e^{ t/m }, e^{-t/n }, \ldots,
e^{-t/n })
}$$
for any $m,n\in\NN$. Then
$$H = H(F^+) = \left\{
\begin{pmatrix}
   I_m & A \\
    0 & I_n
       \end{pmatrix}: A\in M_{m\times n}\right\},$$ and bounded $g_t$-trajectories correspond to (unweighted) badly approximable systems of $m$ linear forms in $n$ variables. In this case the set  \equ{efplush} was proved to be winning by Schmidt \cite{Sc2}, and later its HAW property was established by Broderick, Fishman and Simmons \cite{BFS}.

Finally, let us mention that  recently the second-named author jointly with  Yu  \cite{GY} have proved that, for the special case \eq{spcase}{r_1=\cdots=r_{d-1} \ge r_d\,,}  $\Bad(\rr)$ is HAW. In other words,
the set  \equ{efplush} is HAW on a proper subgroup $H$ of $H(F^+)$, where $G$ and $\Gamma$ are as in  \equ{sldgeneral} and $F^+$ is as in  \equ{sldplusgt} and  \equ{spcase}. This raises a hope that Conjectures  \ref{C2}  and   \ref{C1} can be verified at least for the above special  case.




\begin{thebibliography}{99}

\bibitem{An1} J.\ An, \textsl{Badziahin-Pollington-Velani's theorem and Schmidt's game}, Bull.\ Lond.\ Math.\ Soc.\ {\bf 45} (2013), no.\ 4, 721--733.

\bibitem{An2} \bysame, \textsl{Two-dimensional badly approximable vectors and Schmidt's game}, Duke Math. J., to appear.

\bibitem{ABV} J.\ An, V.\ Beresnevich, S.\ Velani, \textsl{Badly approximable points on planar curves and winning}, preprint (2014), {\tt  arXiv:1409.0064}.


\bibitem{Be}  V.\ Beresnevich, \textsl{Badly approximable points on manifolds}, preprint (2013), {\tt  arXiv:1304.0571}.


\bibitem{BBFKW} R.\ Broderick, Y.\ Bugeaud, L.\ Fishman, D.\ Kleinbock, B.\ Weiss, \textsl{Schmidt's game, fractals, and numbers normal to no base}, Math.\ Res\ Letters, {\bf 17} (2010), no.\ 2, 307--321.



\bibitem{BFK} R.\ Broderick, L.\ Fishman, D.\ Kleinbock, \textsl{Schmidt's game, fractals, and orbits of toral endomorphisms},  Ergodic Theory Dynam.\ Systems {\bf 31}  (2011),  no.\ 4, 1095--1107.



\bibitem{BFKRW} R.\ Broderick, L.\ Fishman, D.\ Kleinbock, A.\ Reich, B.\ Weiss, \textsl{The set of badly approximable vectors is strongly $C^1$ incompressible},
    Math.\ Proc.\ Cambridge Philos.\ Soc.\ {\bf 153} (2012), no.\ 2, 319--339.


\bibitem{BFS} R.\ Broderick, L.\ Fishman,  D.\ Simmons, \textsl{Badly approximable systems of affine forms and incompressibility on fractals}, J.\  Number Theory {\bf 133}, no. 7 (2013), 2186--2205.


\bibitem{BPV} D.\ Badziahin, A.\ Pollington, S.\ Velani, \textsl{On a problem in simultaneous Diophantine approximation: Schmidt's conjecture}, Ann.\ of Math.\ (2) {\bf 174} (2011), no.\ 3, 1837--1883.

\bibitem{Da1} S.G.\ Dani, \textsl{Divergent trajectories of flows on homogeneous spaces and Diophantine approximation}, J.\ Reine Angew.\ Math.\ {\bf 359} (1985), 55--89.

\bibitem{Da2}  \bysame, \textsl{Bounded orbits of flows on \hs
s}, Comment.\ Math.\ Helv.\ {\bf 61} (1986), 636--660.

\bibitem{Da3} \bysame,  \textsl{On orbits of endomorphisms of tori and the Schmidt game}, Ergodic Theory Dynam.\ Systems {\bf 8} (1988), 523--529.


\bibitem{FSU} L.\ Fishman, D.\ Simmons, M.\ Urba\'{n}ski, \textsl{Diophantine approximation and the geometry of limit sets in Gromov hyperbolic metric spaces}, preprint (2013), {\tt arXiv:1301.5630}.


\bibitem{GY} L.\ Guan, J.\ Yu, \textsl{work in progress}.


\bibitem{Kl} D.\ Kleinbock, \textsl{Flows on homogeneous spaces and Diophantine properties of matrices}, Duke Math.\ J.\ {\bf 95} (1998), no.\ 1, 107--124.

\bibitem{KM} D.\ Kleinbock, G.A.\ Margulis, \textsl{Bounded orbits of nonquasiunipotent flows on homogeneous spaces}, Amer.\ Math.\ Soc.\ Transl.\ {\bf 171} (1996), 141--172.


\bibitem{KW2} D.\ Kleinbock, B.\ Weiss, \textsl{
Modified Schmidt games and diophantine approximation
with weights}, Advances in Math.\  {\bf 223} (2010), 1276--1298.


\bibitem{KW1} \bysame, 
\textsl{Modified Schmidt games and  a conjecture of Margulis}, J.\ Mod.\ Dyn.\  {\bf 7}, no.\ 3 (2013), 429--460.



\bibitem{KW3}\bysame,  \textsl{Values of binary quadratic forms at integer points and Schmidt games}, 
in: {\bf Recent trends in ergodic theory and dynamical systems (Vadodara, 2012)}, pp.\  77--92, Contemp.\ Math. {\bf 631}, Amer.\ Math.\ Soc., Providence, RI, 2015.

\bibitem {Ma}   G.A.\  Margulis, \textsl{Dynamical and ergodic properties
of subgroup actions on \hs s with applications to number
theory},  in: {\bf Proceedings of the International Congress of
Mathematicians (Kyoto, 1990)}, pp.\ 193--215, Math.\ Soc.\ Japan, Tokyo, 1991.


\bibitem{Mc}  C.T.\ McMullen, \textsl{Winning sets, quasiconformal maps and Diophantine approximation}, Geom.\ Funct.\ Anal.\ {\bf 20} (2010), no.\ 3, 726--740.



\bibitem{NS} E.\ Nesharim, D.\ Simmons, \textsl{$\Bad(s,t)$ is hyperplane absolute winning}, Acta Arith.\ {\bf 164} (2014), no.\ 2, 145--152.


\bibitem{PV} A.\ Pollington, S.\ Velani, \textsl{
On simultaneously badly approximable numbers},
J.\ London Math.\ Soc. (2) {\bf 66} (2002), no.\ 1, 29--40.



\bibitem{Sc0}
 W.M.\ Schmidt,  \textsl{A metrical theorem in diophantine approximation},  Canad.\ J.\ Math.\  {\bf  12} (1960), ,619--631.

\bibitem{Sc1}\bysame,  \textsl{On badly approximable numbers and certain games}, Trans.\ Amer.\ Math.\ Soc.\ {\bf 123} (1966), 178--199.

\bibitem{Sc2}    \bysame,  \textsl{Badly approximable systems of
linear forms}, J.\ Number Theory {\bf 1} (1969), 139--154.


\bibitem{Sc3} \bysame, \textsl{Open problems in Diophantine approximation}, in: {\bf Diophantine approximations and transcendental numbers (Luminy, 1982)}, pp.\ 271--287, Progr.\ Math.\ 31, Birkh\"{a}user, Boston, 1983.


%
%


\end{thebibliography}
\end{document}

Let $G$ be  a simple Lie group, $\Gamma$ a lattice in $G$, $F = \{g_t : t  \in \RR\}$ a one-parameter
subgroup of $G$. Then the action of $F$ on the homogeneous space $\ggm$ by left
translations defines a flow. When $F$ is non-compact, which will be our standing assumption in the present paper, the flow is ergodic due to Moore's theorem (reference), in particular, almost all orbits are dense.

Describing the set of points with exceptional (non-dense) orbits has been an important theme in homogeneous dynamics. If all eigenvalues of $\Ad\, g_1$ are of absolute value $1$ (such subgroups $F$ are called quasiunipotent), then due to theorems of Ratner all orbit closures are manifolds (reference to Starkov) and points $x\in \ggm$ whose orbits are not dense lie on a countable union of proper subvarieties. In the complementary (non-quasiunipotent) case the situation is known to be completely different.  In particular, according to a conjecture made by Margilis (1990) and proved by Kleinbock and Margulis (1996), the set of points with bounded

If we specialize to the case of $\Gamma$ being a non-uniform lattice in $G$ (that is, the space $\ggm$ being non-compact, another standing assumption), then it follows that almost all orbits are unbounded.

 (noThe The study of bounded orbits of flows on homogeneous spaces  has a long history. If Such flows are well understood

Assume that g1 is not quasiunipotent, that is, Ad g1
has an eigenvalue with modulus different from 1. The results of S.G. Dani [Dan2,
Dan3] suggested the following
Conjecture (A) [Mrg2]. For any nonempty open subset W of ½ the set
{x ? W| the F-orbit of x is bounded}
is of Hausdorff dimension equal to the dimension of G.
This conjecture was proved by Dani in the two following cases:
(i) (see [Dan2]) G = SLn(R), ? = SLn(Z), and
(1) gt = diag(e
?t
, . . . , e?t
, e?t, . . . , e?t),
where ? is such that the determinant of gt is 1;
(ii) (see [Dan3]) G is a connected semisimple Lie group of R-rank 1.
In the present paper we prove Conjecture (A) in the case
(iii) G is a connected semisimple Lie group without compact factors and ? is an
irreducible lattice in G (Theorem 1.1).
In fact, the statement of this theorem is stronger: we consider orbits which are
bounded and stay away from a given closed {gt}-invariant subset Z of ½ of Haar
measure 0.

History..., \cite{KM}..., notion of thick subset...

Let $G=\SL_3(\RR)$, $\Gamma=\SL_3(\ZZ)$.